\title{Additive Problems with Primes from a Thin Bohr Set}
\author{Sarvagya Jain}
\address{Department of Mathematics and Statistics, University of Turku, Turku, Finland}
\email{sarvagyajain.math@gmail.com}
\begin{document}

\begin{abstract}
    For an irrational $\alpha\in \R$, we consider additive problems with the set of primes satisfying $\norm{\alpha p}\leq \frac{1}{p^\tau}$ for some fixed $\tau>0$. In particular, we show that there exist infinitely many non-trivial three-term arithmetic progressions in the set of primes satisfying $\norm{\alpha p}\leq \frac{1}{p^\tau}$ for $\tau\in(0, \tfrac18)$. We also consider a binary Goldbach-type problem.
\end{abstract}

\maketitle
\vspace{-0.5in}
\section{Introduction}
Let $\norm{\cdot}$ denote the distance from a real number to the nearest integer(s).

Let $\alpha$ be a fixed irrational number and $\tau>0$ be fixed.  In this paper, we study additive problems for primes $p$ satisfying
\begin{align}\label{defining condn}
    \norm{\alpha p}\leq p^{-\tau}.
\end{align}

A classical problem in number theory is to measure how closely rationals can approximate an irrational number.  A further refinement is to restrict the denominators to special sets, prime numbers being a particularly natural choice.  In fact, for almost every irrational $\alpha$, there are infinitely many primes $p$ for which~\eqref{defining condn} holds with $\tau=1$ (see, e.g., \cite{duffin-schaeffer}).

When $\alpha$ is fixed, much less is known.  Matomäki~\cite{matomaki}, building on results of Harman~\cite{harman} and of Heath‑Brown and Jia~\cite{heathbrown-jia}, showed that for every fixed $\tau\in(0, \tfrac13)$, there are infinitely many primes $p$ that satisfy~\eqref{defining condn}.  It is widely conjectured that the same holds for all $\tau\in(0, 1)$.  Conversely, Harman~\cite{harman1} proved that for $\tau=1$ there are uncountably many irrationals $\alpha$ for which~\eqref{defining condn} holds for at most finitely many primes $p$.

It is natural to ask whether the primes $p$ satisfying~\eqref{defining condn} exhibit any additional arithmetic structure. For example, Shi~\cite{shi}, improving the work of Matom\"aki \cite{matomaki2}, showed that there are infinitely many Chen primes $p$ satisfying~\eqref{defining condn} for $\tau\in(0, 3/200)$. For other patterns in primes satisfying~\eqref{defining condn}, see also \cite{guo, teravainen, todorova}. These results suggest that the distribution of such primes retains significant structure despite the Diophantine constraint.

Studying three‐term arithmetic progressions within subsets of the primes has long driven advances in additive number theory, giving rise to techniques now central to the field. Van der Corput~\cite{van der corput} first applied the circle method to show that the primes contain infinitely many nontrivial three‐term progressions. Green~\cite{green} later introduced the transference principle, demonstrating that any dense subset of the primes likewise contains infinitely many such progressions. More recently, building on Kelley–Meka’s breakthrough \cite{kelley-meka}, Bloom and Sisask~\cite{bloom-sisask} proved that if
\[
\mathcal{A}\subseteq [1, N]\cap\mathbb{Z}
\]
contains no nontrivial three-term arithmetic progressions, then
\[
\lvert \mathcal{A}\rvert \leq \exp\Bigl(-c (\log N)^{1/9}\Bigr) N.
\]
By the prime number theorem, both Van der Corput’s and Green’s theorems follow immediately.

Nevertheless, by suitably adapting either the circle method or the transference principle, one can still force three-term progressions in extremely sparse yet highly structured subsets of the primes. For example, Du-Pan-Sun~\cite[Theorem~1.3]{yuchen-du-pan} showed that even a very thin set of Piatetski-Shapiro primes contains infinitely many nontrivial three‐term arithmetic progressions.

Analogously, one may ask about three‐term progressions among the primes $p$ satisfying~\eqref{defining condn}, which are known to be sparse. In this work, we adapt the classical circle method to this context and establish the following theorem.
\begin{theorem}\label{theorem 3APs}
Let $\alpha\in\R$ be irrational and $\tau\in\pare{0,\tfrac18}$. Then there exist infinitely many non-trivial triples of primes $(p_1, p_2, p_3)$ satisfying
\[
\|\alpha p_j\|\le \frac{1}{p_j^{\tau}} \quad \text{for } j=1,2,3,
\]
and 
\[
p_1+p_3=2p_2.
\]
\end{theorem}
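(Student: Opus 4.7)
I would apply the Hardy--Littlewood circle method in dyadic ranges $N<n\leq 2N$, since obtaining $\gg N^{2-3\tau}$ progressions at each dyadic scale already yields infinitely many. For $n$ in this range, $\|\alpha n\|\leq n^{-\tau}$ is implied by $\|\alpha n\|\leq N^{-\tau}/2$, so I replace the indicator by a smooth $1$-periodic majorant $\rho$ with $\mathbf{1}_{\|x\|\leq cN^{-\tau}}\leq\rho(x)\leq\mathbf{1}_{\|x\|\leq N^{-\tau}}$, chosen (as a Fej\'er-type kernel) so that $\hat\rho\geq 0$ with rapid decay beyond $|h|\sim N^{\tau}$. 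Setting
\[
S(\beta)\;:=\;\sum_{N<n\leq 2N}\Lambda(n)\,\rho(\alpha n)\,e(\beta n),\qquad R(N)\;:=\;\int_0^1 S(\beta)^2\,S(-2\beta)\,d\beta,
\]
the quantity $R(N)$ is a $\Lambda$-weighted count of triples $(n_1,n_2,n_3)$ with $n_1+n_3=2n_2$ obeying the Bohr constraint. A lower bound $R(N)\gg N^{2-3\tau}$ dominates the trivial diagonal $n_1=n_2=n_3$ (contributing $O(N^{1-\tau}\log^2 N)$) and the negligible contribution of higher prime powers, thereby producing non-trivial prime triples.

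\textbf{Fourier expansion and major arcs.} Expanding $\rho(\alpha n)=\sum_h\hat\rho(h)e(h\alpha n)$ and truncating at $|h|\leq H:=N^\tau(\log N)^C$ introduces a negligible error, leaving
\[
S(\beta)\;=\;\sum_{|h|\leq H}\hat\rho(h)\,T(\beta+h\alpha)\;+\;O(N^{-A}),\qquad T(\gamma)\;:=\;\sum_{N<n\leq 2N}\Lambda(n)\,e(\gamma n),
\]
and with $\hat\rho\geq 0$ one has $\|\hat\rho\|_1=\rho(0)\asymp 1$. Define the major arcs $\mathfrak{M}\subseteq[0,1]$ as narrow intervals around $a/q-h\alpha\!\!\pmod 1$ for $(a,q)=1$, $q\leq Q_0=(\log N)^B$, $|h|\leq H$; the irrationality of $\alpha$ makes these disjoint once $N$ is large. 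On $\mathfrak{M}$, Siegel--Walfisz replaces $T(a/q+\xi)$ by $(\mu(q)/\phi(q))\sum_{N<n\leq 2N}e(\xi n)$ up to $O(N\exp(-c\sqrt{\log N}))$, and the standard 3-AP singular-series computation produces a main term of size $\mathfrak{S}(\alpha)\,N^{2-3\tau}/(\log N)^3$; the series $\mathfrak{S}(\alpha)$ is positive because the Bohr shifts impose no new local obstruction.

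\textbf{Minor arcs -- the main obstacle.} On $\mathfrak{m}=[0,1]\setminus\mathfrak{M}$, H\"older and Parseval give
\[
\Big|\int_{\mathfrak{m}}S(\beta)^2\,S(-2\beta)\,d\beta\Big|\;\leq\;\sup_{\beta\in\mathfrak{m}}|S(\beta)|\cdot\|S\|_2\,\|S_{-2}\|_2\;\ll\;\|S\|_{\infty,\mathfrak{m}}\cdot N^{1-\tau}\log N,
\]
so I must establish $\|S\|_{\infty,\mathfrak{m}}\ll N^{1-2\tau-\delta}$ for some $\delta>0$. The triangle inequality together with $\|\hat\rho\|_1\ll 1$ reduces this to proving $|T(\beta+h\alpha)|\ll N^{1-2\tau-\delta}$ for every $\beta\in\mathfrak{m}$ and $|h|\leq H$, and by the construction of $\mathfrak{M}$ each such frequency sits on a genuine minor arc for $T$. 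Plain Vinogradov yields only $N^{4/5}(\log N)^{O(1)}$, sufficient merely for $\tau<1/10$; to reach $\tau<1/8$, I would apply Heath-Brown's identity (or Vaughan's) to decompose $\Lambda$ into a logarithmic number of Type~I and Type~II bilinear sums, and bound each with refined exponential-sum estimates. Crucially, the irrationality of $\alpha$ ensures that the shifts $\{h\alpha\!\!\pmod 1:|h|\leq H\}$ remain well-separated, so at most $O(1)$ of the frequencies $\beta+h\alpha$ can cluster near any rational with small denominator; balancing this separation against the Type~I/II cut-off and the parameters $Q_0$, $H$ produces the required savings exactly throughout $\tau<1/8$. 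This bilinear minor-arc analysis is the technical heart of the proof; the rest is routine circle-method bookkeeping.
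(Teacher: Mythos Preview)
Your outline is right in spirit, but the minor-arc step conceals the entire difficulty, and the mechanism you propose for it does not work.  You reduce to showing $|T(\beta+h\alpha)|\ll N^{1-2\tau-\delta}$ for \emph{every} $|h|\le H$ when $\beta\in\mathfrak m$.  With $Q_0=(\log N)^B$, the only information this gives about $\beta+h\alpha$ is that its best rational approximation has denominator $q>(\log N)^B$, so Vinogradov's bound $N/\sqrt q+N^{4/5}+(Nq)^{1/2}$ yields merely a log-power saving, not a power saving.  Taking $Q_0$ to be a power of $N$ would fix this but destroys your Siegel--Walfisz major-arc treatment.  More fundamentally, invoking Heath--Brown's identity does not help: no decomposition of $\Lambda$ is known to give $|T(\gamma)|\ll N^{3/4-\varepsilon}$ uniformly on minor arcs; the $N^{4/5}$ term is a genuine barrier coming from Type~II sums with both variables near $N^{1/2}$.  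Hence bounding each shifted sum individually cannot go beyond $\tau<1/10$.  Your closing sentence about separation (``at most $O(1)$ of the frequencies can cluster near any rational'') gestures at the right idea, but irrationality alone implies nothing quantitative here.

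The paper reaches $\tau<1/8$ by a different route, using only the standard Vinogradov bound.  One first restricts $N$ to a subsequence determined by a convergent denominator $s\asymp N^{1-4\mu-2\eta}$ of $\alpha$; a pigeonhole argument (Lemma~\ref{lemma for bound on exceptions that can be approximated using small denominator}) then shows that among the $O(M)$ frequencies $\theta+h\alpha$, at most $O(MP^2/s)+1$ can lie within $P/(qN)$ of a rational $a/q$ with $q<P$.  Introducing a finite hierarchy $P_1>\cdots>P_J$ satisfying the recursion $MP_j^2/s\ll\sqrt{P_{j+1}}$ (and $M\ll\sqrt{P_1}$), one stratifies the frequencies by the scale at which they first fail to be well-approximated: at level $j$ the count is $\ll MP_{j-1}^2/s+1$ and Vinogradov saves $\sqrt{P_j}$, so each shell contributes $\delta N^{1-\eta/2+o(1)}$; at the deepest level $P_J$ at most one frequency survives, and that becomes the genuine major-arc term.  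The exponent $1/8$ emerges from the constraint that such a hierarchy exists (one needs $P_1\gg M^2$ and $P_J\ll\sqrt{s/M}$ simultaneously), not from any improved exponential-sum input.  This iterated repulsion, together with the choice of $N$ relative to $s$, is the missing idea in your proposal; the same device is also needed to justify your implicit claim $\|S\|_2^2\ll N^{1-\tau}$, which fails for arbitrary $N$ when $\alpha$ is a Liouville number.
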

One might ask whether Green’s Fourier-analytic transference principle can be employed to improve the range of $\tau$ in the theorem above. However, the construction of a majorant satisfying a suitable restriction estimate is obstructed by the Diophantine constraint, as its indicator function exhibits many large Fourier coefficients.

We also consider the binary Goldbach problem: expressing even positive integers as the sum of two primes. The triangle inequality prevents such a decomposition for most even integers when both primes simultaneously satisfy~\eqref{defining condn}. Therefore, we impose the Diophantine condition on only one of them. Our result is the following:

\begin{theorem}\label{theorem binary goldbach}
  Let $\tau\in\pare{0,\tfrac18}$ be fixed. Then for almost all even integers $n$, there exist primes $p$ and $q$ such that
  \[
    p + q = n,
    \quad
    \norm{\alpha p} \le p^{-\tau}.
  \]
\end{theorem}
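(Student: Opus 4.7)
Following the strategy used for Theorem~\ref{theorem 3APs}, I would apply the circle method combined with a Parseval bound over the exceptional set of even $n$. Fix $N$ large and consider even $n \sim N$. Set
\[
S_1(\theta) = \sum_{p \le N}(\log p)\,\mathbf{1}_{\norm{\alpha p}\le p^{-\tau}}\, e(p\theta), \qquad S_2(\theta)=\sum_{p\le N}(\log p)\,e(p\theta),
\]
and
\[
R(n) = \sum_{\substack{p+q=n\\ p,q\le N}}(\log p)(\log q)\,\mathbf{1}_{\norm{\alpha p}\le p^{-\tau}} = \int_0^1 S_1(\theta)S_2(\theta)e(-n\theta)\,d\theta.
\]
I would split $[0,1)$ into major arcs $\mathfrak{M}$ (the usual neighbourhoods of rationals $a/q$ with $q\le N^\sigma$, $\sigma$ small) and minor arcs $\mathfrak{m}$, and decompose $R(n) = R_{\mathfrak{M}}(n) + R_{\mathfrak{m}}(n)$. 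The target is a main term of size $\gg \mathfrak{S}(n)\,N^{1-\tau}/(\log N)^A$ on $\mathfrak{M}$, together with an $L^2$ bound, averaged over $n$, for the minor arc contribution.

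\textbf{Major arcs.} Here I would replace the Bohr indicator by a Vaaler trigonometric approximant $\sum_{|h|\le H}c_h\,e(h\alpha p)$ applied after a dyadic decomposition in $p$ to accommodate the $p$-dependence of the cutoff $p^{-\tau}$. The $h=0$ piece produces the weighted prime sum $2\sum_p p^{-\tau}(\log p)\,e(p\theta)$, to which Siegel--Walfisz applies on $\mathfrak{M}$, giving
\[
S_1(\theta)\approx \frac{2\mu(q)}{\varphi(q)}\int_2^N x^{-\tau}e(x\beta)\,dx,\qquad \theta = a/q+\beta\in\mathfrak{M},
\]
together with the analogous asymptotic for $S_2$ without the $x^{-\tau}$ weight. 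For $h\ne 0$ the shift $h\alpha$ pushes $\theta+h\alpha$ off of $\mathfrak{M}$ (using the irrationality of $\alpha$ quantitatively), so Vinogradov's minor arc bound handles each twisted prime sum, and the $h$-aggregate is summable by the decay of the Vaaler coefficients. Integrating the product of these two expansions against $e(-n\theta)$ yields the classical binary Goldbach singular series $\mathfrak{S}(n)$, which is $\gg 1$ for even $n$, giving $R_{\mathfrak{M}}(n) \gg \mathfrak{S}(n)\,N^{1-\tau}/(\log N)^A$.

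\textbf{Minor arcs and conclusion.} By Parseval,
\[
\sum_{n\le N}|R_{\mathfrak{m}}(n)|^{2} \;\le\; \Bigl(\sup_{\theta\in\mathfrak{m}}|S_2(\theta)|\Bigr)^{2}\int_0^1 |S_1(\theta)|^{2}\,d\theta.
\]
The integral equals $\sum_{p\le N,\,\norm{\alpha p}\le p^{-\tau}}(\log p)^2 \ll N^{1-\tau}(\log N)^{O(1)}$ by a dyadic application of Weyl equidistribution, while Vinogradov's minor arc bound furnishes $\sup_{\theta\in\mathfrak{m}}|S_2(\theta)| \ll N^{1-\delta}$ for some $\delta>0$ depending on $\sigma$. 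Letting $E\subset[N/2,N]$ be the set of even $n$ with $R(n)=0$, the main term must be entirely cancelled by the minor arc contribution on $E$, so $|R_{\mathfrak{m}}(n)|\gg N^{1-\tau}/(\log N)^A$ there. Chebyshev then yields $|E|\ll N^{1+\tau-2\delta+\varepsilon}$, which is $o(N)$ once $\sigma$ is chosen so that $2\delta>\tau$; since $\tau<\tfrac18$ this is comfortably achievable.

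\textbf{Main obstacle.} The delicate step is the major arc control of $S_1$: the $p$-dependent cutoff forces the Vaaler expansion to be carried out in dyadic pieces, and the resulting twisted prime sums $\sum_p(\log p)\,e\!\bigl(p(\theta+h\alpha)\bigr)$ must be bounded on $\mathfrak{M}$ uniformly in $h$, with sufficient decay in $h$ to sum the Fourier series and still recover the main term. This quantitative interaction between the rational major-arc structure and the irrational shift $h\alpha$ is where the restriction $\tau<\tfrac18$ enters, in parallel with Theorem~\ref{theorem 3APs}.
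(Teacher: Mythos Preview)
Your minor arc estimate is sound in outline, but the major arc step contains the central gap. You assert that for $h\neq 0$ the shift $h\alpha$ pushes $\theta+h\alpha$ off the major arcs, so that Vinogradov's bound applies to each twisted sum $\sum_p(\log p)\,e\bigl(p(\theta+h\alpha)\bigr)$. For a general irrational $\alpha$ this is simply false: among the $H\asymp N^\tau$ frequencies $\{\theta+h\alpha\}_{|h|\le H}$, many may lie near rationals with small denominator (take $\alpha$ Liouville, or merely $\alpha$ extremely close to some $r/s$ with $s$ moderate). The theorem imposes no irrationality-measure hypothesis on $\alpha$, so no repulsion is available a~priori. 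Bounding \emph{how many} of the shifted frequencies can simultaneously admit small-denominator approximations is exactly the content of Lemma~\ref{lemma for bound on exceptions that can be approximated using small denominator}, and accounting for the surviving exceptional $h$ is what forces the hierarchy of scales $P_1>\cdots>P_J$ in Section~\ref{notation setup}; the constraint $\tau<\tfrac18$ emerges from making that hierarchy close up (via~\eqref{choice of J}--\eqref{upper bound on PJ}), not from a comparison of $\tau$ with a single minor-arc exponent $\delta$. Your sketch names this as the ``main obstacle'' but supplies no mechanism to overcome it.

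There is also a structural difference worth recording. The paper does not evaluate the major arc contribution of $S_w$ directly; instead it proves the $L^2$ comparison
\[
\sum_{n\le N}\Bigl|\int_{\mbbT} S(\theta)\bigl(S_w(\theta)-\widehat{w}(0)S(\theta)\bigr)e(-n\theta)\,d\theta\Bigr|^2 \ll \delta^2 N^{3-\eta/4},
\]
thereby reducing everything to the classical almost-all estimate for $\int_{\mbbT} S(\theta)^2 e(-n\theta)\,d\theta$. This sidesteps any need to run Siegel--Walfisz with $q$ up to a power of $N$, which your route implicitly demands: your requirement $2\delta>\tau$ forces major arcs out to $q\le N^\sigma$ with $\sigma>\tau$, well beyond the Siegel--Walfisz range. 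The inputs to the comparison are Lemma~\ref{major arc estimate}, which gives $S_w(\theta)=\widehat{w}(0)S(\theta)+O(\delta N^{1-\eta/3})$ on each $\mcM_{a,q,0}$, together with the sharpened lower bound $P_J\gg N^{\mu+\eta}$ of~\eqref{lower bound on J in bin goldbach} (obtained by tuning $\mu$ close to one of the thresholds $\mu_t^*$), which is precisely what makes $\sup_{\theta\notin\mcM^{(0)}}|S(\theta)|$ small enough to close the estimate on the complement of $\mcM^{(0)}$.
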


We remark that the techniques developed here adapt readily to other additive problems not explicitly addressed in this paper.

\subsection{Organisation of the Paper}
We begin in Section~\ref{notation} by introducing the basic notation used throughout the paper. 

In Section~\ref{exponential sum estimates}, we establish a key frequency repulsion estimate. Since the condition~\eqref{defining condn} does not admit an obvious pseudorandomization, we are forced to contend with multiple frequencies simultaneously. To manage this, we exploit Diophantine approximation to isolate one frequency at a time. The arguments are elementary in nature, relying on the pigeonhole principle.

We introduce a smooth weight function in Section~\ref{weight function} and examine its properties. This function serves to smooth the abrupt cutoff imposed by the condition~\eqref{defining condn}, making the analysis easier. 

In Section~\ref{notation setup}, we present a slight modification of the traditional circle method designed to handle multiple frequencies. 
In Section~\ref{Major Arc and Minor Arc Estimates}, we establish major and minor arc estimates that go into our variant of the circle method. 

Finally, in Sections~\ref{proof of theorem 3APs} and~\ref{proof of theorem binary goldbach}, we finish the proof of Theorems~\ref{theorem 3APs} and~\ref{theorem binary goldbach} respectively.

\subsection{Acknowledgements} 
The author would like to thank his advisor, Kaisa Matomäki, for suggesting the problem, for her guidance and suggestions throughout the project, and for a thorough reading of the initial manuscript and remarks that helped improve the exposition. The author was supported by the Academy of Finland, Centre of Excellence (grant numbers 346307 and 364214) and by the University of Turku Graduate School’s Exactus fellowship while working on this project.

\section{Notation}\label{notation}
Throughout, we write $e(\theta)=e^{2\pi i\theta}$, $\|\alpha\|$ for the distance from $\alpha$ to the nearest integer(s), and $\Lambda(n)$ for the von Mangoldt function. We think of $\mbbT$ as the metric space $\pare{\R/\Z, \norm{\cdot}}$. We use the notation $f\ll g$ (or $f=O(g)$) to mean there is a constant $C$ such that $f(x)\le C\,g(x)$ for all sufficiently large $x$, and $f\ll_A g$ (or $f=O_A(g)$) when that constant may depend on the parameter $A$.  For any set or property $\mathcal S$, $\mathbf1_{\mathcal S}$ denotes its indicator function. If $h\in L^1(\mbbT)$, its Fourier coefficients are
\[
\widehat{h}(m)
  := \int_{\mathbb{T}} h(\theta)\,e(-m\theta)\,d\theta,
  \qquad m \in \mathbb{Z}.
\]
Finally, if $f,g\in L^1(\mathbb T)$, their Fourier convolution is defined by
\[
  (f*g)(\phi)
  =\int_{\mathbb T}f(\theta)\,g(\phi-\theta)\,d\theta.
\]
One immediately checks the standard identity
\begin{align}\label{fourier coefficient of convolution}
    \widehat{f*g}=\widehat f\cdot\widehat g.
\end{align}

\section{Exponential Sum Estimates}\label{exponential sum estimates}
In this section, we provide estimates to bound the exponential sums in our analysis. We first recall the well-known minor arc estimate for exponential sums over primes.
Let
\begin{align}\label{exponential sum over primes defn}
    S(\theta;N) = S(\theta) := \sum_{n\leq N}\Lambda(n)e(n\theta).
\end{align}
\begin{lemma}\label{minor arc estimate for Von Mangoldt}
Let $\beta\in\R$ be a real number for which there exist integers $a$ and $q$, with $(a,q)=1$, satisfying
\[
\left|\beta - \frac{a}{q}\right| \leq \frac{1}{q^2}.
\]
Then, for every $N\ge q$, we have
\[
\abs{S(\beta)} \ll (\log N)^{4}\left(\frac{N}{q^{1/2}} + N^{4/5} + (Nq)^{1/2}\right).
\]
\end{lemma}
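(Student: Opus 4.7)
The plan is to use Vaughan's identity to decompose $\Lambda(n)$ into a bounded number of Type I and Type II bilinear sums, each of which can be handled by combining trivial geometric-series estimates with the standard Diophantine lemma
\[
\sum_{m\le M}\min\!\pare{\frac{N}{m},\,\frac{1}{\norm{m\beta}}}
\ll \pare{\frac{N}{q}+M+q}\log(qN),
\]
which is a direct consequence of the hypothesis $|\beta-a/q|\le 1/q^2$.

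More precisely, fixing parameters $U,V\in[1,N]$ to be optimised at the end, Vaughan's identity splits $S(\beta)$ (after partial summation and dyadic decomposition) into $O(\log N)$ pieces of two shapes. The first is a Type I sum
\[
T_{I}=\sum_{m\le UV}a_{m}\sum_{k\le N/m}e(km\beta),
\qquad |a_{m}|\ll\log N,
\]
whose inner geometric series is bounded by $\min(N/m,\norm{m\beta}^{-1})$. The Diophantine lemma applied to the variable $m\beta$ gives $T_{I}\ll (\log N)^{2}\bigl(N/q+UV+q\bigr)$. The second is a Type II sum
\[
T_{II}=\sum_{U<m\le N/V}\sum_{V<k\le N/m}a_{m}b_{k}e(km\beta),
\qquad |a_{m}|,|b_{k}|\ll\log N.
\]
For $T_{II}$, I would apply Cauchy--Schwarz in the outer variable $m$, expand the resulting square, and interchange summations so that the innermost sum is a geometric series in $m$; this replaces $e(km\beta)$ by a factor $\min(N/m_{0},\norm{(k-k')\beta}^{-1})$ for an appropriate range of $m_{0}$, after which the Diophantine lemma applied to $(k-k')\beta$ yields
\[
T_{II}\ll (\log N)^{2}\pare{\frac{N}{q^{1/2}}+\frac{N}{U^{1/2}}+\frac{N}{V^{1/2}}+(Nq)^{1/2}}.
\]

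Choosing $U=V=N^{2/5}$ balances the Type I bound $N/q+UV+q$ against the Type II bound and produces the claimed main term $N/q^{1/2}+N^{4/5}+(Nq)^{1/2}$, with a further factor of $\log N$ picked up from the dyadic decomposition bringing the total logarithmic loss to $(\log N)^{4}$. The main technical step is the Cauchy--Schwarz manipulation of the Type II sum: one must correctly track the range of the difference variable $k-k'$ and verify that the contribution from $k=k'$ is absorbed by the $(Nq)^{1/2}$ term; everything else reduces to careful bookkeeping of logarithmic factors and repeated application of the Diophantine summation lemma. This is the classical Vinogradov--Vaughan bound, so no new ingredients are needed beyond an honest implementation of this standard template.
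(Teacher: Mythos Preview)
Your outline is correct: this is precisely the classical Vinogradov--Vaughan argument, and the choice $U=V=N^{2/5}$ together with the standard Type~I and Type~II estimates via the Diophantine summation lemma does yield the stated bound with the $(\log N)^4$ loss. The paper does not reprove this; it simply cites \cite[Theorem~3.1]{vaughan} and remarks that prime powers contribute negligibly, so your sketch is exactly the content of the reference being invoked.
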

\begin{proof}
    The result follows from \cite[Theorem 3.1]{vaughan} as the contribution from prime powers is negligible.
\end{proof}
In the next lemma, we show that among the phases $\set{\theta + m\alpha}_m$, only a limited number can be simultaneously approximated by rationals with small denominators.
\begin{lemma}\label{lemma for bound on exceptions that can be approximated using small denominator}
    Let $\alpha, \theta \in \mathbb{T}$. Suppose $r$ and $s$ are coprime integers satisfying
\begin{align}\label{rational appx to alpha}
    \left|\alpha - \frac{r}{s}\right| \le \frac{1}{s^2}.
\end{align}
Let $P, M$ and $N$ be positive real numbers such that $N\geq 16sP^2$. Let $\mcE(P;\theta)\subseteq[-M, M]\cap \Z$ denote the set of integers $m$ such that there exist coprime integers $a_m$ and $q_m$ with $q_m< P$ such that
\begin{align*}
    \abs{\theta+m\alpha-\frac{a_m}{q_m}}\leq \frac{P}{q_mN}.
\end{align*}
Then 
\begin{align*}
    \abs{\mcE(P;\theta)}\leq \frac{16MP^2}{s}+1.
\end{align*}
\end{lemma}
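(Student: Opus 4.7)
The plan is to show that any two distinct elements of $\mcE(P;\theta)$ are separated by more than $7s/(8P^2)$ in $[-M,M]$, after which a simple spacing count yields the claimed bound.

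Concretely, I would start from two elements $m_1,m_2\in\mcE(P;\theta)$ with associated coprime approximants $(a_{m_i},q_{m_i})$. Subtracting the two defining inequalities and multiplying through by the integer $q_{m_1}q_{m_2}$ (so that the rational part $a_{m_1}q_{m_2}-a_{m_2}q_{m_1}$ is an integer) gives
\[
\big\|q_{m_1}q_{m_2}(m_1-m_2)\,\alpha\big\|_{\mbbT}\le (q_{m_1}+q_{m_2})\,P/N\le 2P^2/N.
\]
Setting $K=q_{m_1}q_{m_2}(m_1-m_2)$ and combining this with $|\alpha-r/s|\le 1/s^2$ via the triangle inequality,
\[
\|Kr/s\|_{\mbbT}\le 2P^2/N+|K|/s^2.
\]

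Next, I would use the hypothesis $N\ge 16sP^2$ to bound $2P^2/N\le 1/(8s)$. Whenever $|K|<7s/8$, the right-hand side above is strictly less than $1/s$. Since $\gcd(r,s)=1$, the quantity $\|Kr/s\|_{\mbbT}$ lies in $\{0,1/s,2/s,\ldots\}$, so it must equal $0$, i.e., $s\mid K$. Together with $|K|<s$ this forces $K=0$, and hence $m_1=m_2$ because $q_{m_1}q_{m_2}\neq 0$. Since $q_{m_1}q_{m_2}<P^2$, the inequality $|m_1-m_2|\le 7s/(8P^2)$ already guarantees $|K|<7s/8$; contrapositively, any two distinct elements of $\mcE(P;\theta)$ must be separated by more than $7s/(8P^2)$.

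Finally, an elementary count of integers in $[-M,M]$ with pairwise gaps exceeding $7s/(8P^2)$ bounds $|\mcE(P;\theta)|$ by $16MP^2/(7s)+1\le 16MP^2/s+1$. The key technical choice is the weight $q_{m_1}q_{m_2}$ one multiplies by: this clears both denominators simultaneously at the cost of a factor $P^2$ inside $|K|$, and the hypothesis $N\ge 16sP^2$ is calibrated precisely so that the error $2P^2/N$ remains comfortably below $1/s$, which is what allows the dichotomy to drive the spacing argument.
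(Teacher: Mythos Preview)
Your argument is correct and is essentially the same as the paper's: both show that two distinct elements of $\mcE(P;\theta)$ cannot lie within roughly $s/P^2$ of each other, by subtracting the defining approximations and playing the resulting small quantity against the $1/s$-spacing of multiples of $r/s$. The only cosmetic differences are that you run the spacing argument directly while the paper phrases it as pigeonhole-to-contradiction, and you clear denominators by multiplying through by $q_{m_1}q_{m_2}$ whereas the paper instead bounds the nonzero rational $(m_1-m_2)\tfrac{r}{s}-\bigl(\tfrac{a_1}{q_1}-\tfrac{a_2}{q_2}\bigr)$ from below by $1/(sq_1q_2)$; these are equivalent manoeuvres.
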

\begin{proof}
    Suppose for the sake of contradiction that 
    \begin{align*}
        \abs{\mcE(P;\theta)}> \frac{16MP^2}{s} +1.
    \end{align*}
    
    By the pigeon hole principle, we can find two integers $m_1, m_2\in \mcE(P;\theta)$ such that 
    \begin{align}\label{gap between m1 and m2}
        \abs{m_1-m_2}\leq \frac{s}{8P^2}.
    \end{align}
   By the definition of $\mcE(P;\theta)$, for each $j\in\{1,2\}$, there exist coprime integers $a_j$ and $q_j$ with $q_j<P$ such that
    \[
    \left|\theta+m_j\alpha-\frac{a_j}{q_j}\right|\le \frac{P}{Nq_j}.
    \]
    Applying the triangle inequality yields
    \[
    \left|(m_1-m_2)\alpha-\Bigl(\frac{a_1}{q_1}-\frac{a_2}{q_2}\Bigr)\right|\le \frac{P}{N}\Bigl(\frac{1}{q_1}+\frac{1}{q_2}\Bigr).
    \]
    A further application of the triangle inequality combined with~\eqref{rational appx to alpha} and~\eqref{gap between m1 and m2} gives
    \[
    \left|(m_1-m_2)\frac{r}{s}-\Bigl(\frac{a_1}{q_1}-\frac{a_2}{q_2}\Bigr)\right|\le \frac{P}{N}\Bigl(\frac{1}{q_1}+\frac{1}{q_2}\Bigr)+\frac{1}{8sP^2}.
    \]
    By~\eqref{gap between m1 and m2}, the left-hand side is nonzero. Hence,
    \[
    \frac{1}{sq_1q_2}\le \frac{P}{N}\Bigl(\frac{1}{q_1}+\frac{1}{q_2}\Bigr)+\frac{1}{8sP^2}.
    \]
    Multiplying both sides by $sq_1q_2$ and using $q_1+q_2\le 2P$ and $q_1q_2\le P^2$, we deduce that
    \[
    1\le \frac{2sP^2}{N}+\frac{1}{8}.
    \]
    As $N\geq 16sP^2$, the right-hand side is strictly less than $1$, a contradiction.
\end{proof}
Next, we show that, aside from a few exceptions, the exponential sums with phases $\{\theta + m\alpha\}_{m}$ are well controlled.
\begin{lemma}\label{bound for exponential sums with few exceptions}
    Let $\alpha, \theta, r, s, P, M, N$ and $\mcE(P;\theta)$ be as in Lemma~\ref{lemma for bound on exceptions that can be approximated using small denominator}. Let $a$ and $q$ be coprime integers with $2q<P$. For all integers $m\in [-M, M]\setminus \mcE(P;\theta)$, one has
    \[
        \left|\sum_{n\le N} e\Bigl(n\Bigl(\theta+m\alpha-\frac{a}{q}\Bigr)\Bigr)\right|\ll \frac{Nq}{P}.
    \]
\end{lemma}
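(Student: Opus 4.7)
The plan is to reduce the claim to the standard geometric-sum estimate and then use the hypothesis $m\notin\mathcal{E}(P;\theta)$ to obtain the required lower bound on the nearest-integer distance. Set $\phi := \theta + m\alpha - a/q$. Then
\[
\Bigl|\sum_{n\le N} e(n\phi)\Bigr| \ll \min\!\Bigl(N,\, \tfrac{1}{\|\phi\|}\Bigr),
\]
so to prove the lemma it suffices to show that
\[
\|\phi\| \ge \frac{P}{qN}
\]
whenever $m\in[-M,M]\setminus\mathcal{E}(P;\theta)$.

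To produce this, I would unwind the definition of $\mathcal{E}(P;\theta)$ along integer shifts of $a/q$. By hypothesis $(a,q)=1$ and $2q<P$, so $q<P$; moreover, for every $k\in\mathbb{Z}$ the integer $a+kq$ remains coprime to $q$, and hence $(a+kq)/q$ is a valid reduced rational with denominator $q<P$. Since $m\notin\mathcal{E}(P;\theta)$, the defining inequality fails for this rational, giving
\[
\Bigl|\theta + m\alpha - \frac{a+kq}{q}\Bigr| > \frac{P}{qN}
\qquad\text{for every } k\in\mathbb{Z}.
\]
Taking the infimum over $k\in\mathbb{Z}$ yields $\|\phi\|\ge P/(qN)$, which combined with the geometric-sum bound above produces the claimed estimate $|{\sum}_{n\le N} e(n\phi)|\ll Nq/P$.

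There is no serious obstacle here; the only subtlety is the verification that every integer shift $(a+kq)/q$ is still in lowest terms, so that these rationals truly belong to the class of approximants whose failure is guaranteed by $m\notin\mathcal{E}(P;\theta)$. The hypothesis $2q<P$ (rather than merely $q<P$) is mild insurance that $q$ comfortably meets the threshold in the definition of $\mathcal{E}(P;\theta)$, so no additional parity or boundary issues arise.
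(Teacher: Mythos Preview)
Your argument is correct and is in fact more direct than the paper's own proof. You observe that for $m\notin\mcE(P;\theta)$ the defining inequality fails for \emph{every} reduced rational with denominator below $P$; since $(a+kq)/q$ is such a rational for every $k\in\Z$, the lower bound $\|\theta+m\alpha-a/q\|>P/(qN)$ drops out immediately, and the geometric-sum estimate finishes. This really only needs $q<P$, so your remark that the hypothesis $2q<P$ is ``mild insurance'' is accurate---your route does not use the factor of~$2$ at all.

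The paper instead invokes Dirichlet's theorem to produce a reduced approximation $a_m/q_m$ to $\theta+m\alpha$ with $q_m\in[P,N/P]$ (the lower bound on $q_m$ coming precisely from $m\notin\mcE(P;\theta)$), and then argues via the triangle inequality and the spacing $\|a_m/q_m-a/q\|\ge 1/(qq_m)$ between distinct reduced fractions. The condition $2q<P\le q_m$ is used there to guarantee $1/(qq_m)-1/q_m^2\gg 1/(qq_m)\ge P/(qN)$. Your approach sidesteps Dirichlet and this subtraction entirely by testing the membership condition of $\mcE(P;\theta)$ directly against the target rational $a/q$, which is both shorter and yields a cleaner constant.
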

\begin{proof}
 Fix $m\in [-M, M]\setminus \mcE(P;\theta)$. By the Dirichlet approximation theorem and the definition of $\mcE(P;\theta)$, there exists coprime integers $a_m$ and $q_m$ with $q_m\in [P, N/P]$ such that 
 \begin{align*}
     \abs{\theta+m\alpha-\frac{a_m}{q_m}}\leq \frac{P}{q_mN}\leq \frac{1}{q_m^2}.
 \end{align*}
 Then, by the triangle inequality,
\[
\left\|\theta+m\alpha-\frac{a}{q}\right\|\ge \left\|\frac{a_m}{q_m}-\frac{a}{q}\right\|-\frac{1}{q_m^2}.
\]
Since $2q<P\le q_m$, we have ${q_m}\neq{q}$ and thus
\[
\left\|\frac{a_m}{q_m}-\frac{a}{q}\right\|\ge \frac{1}{qq_m}.
\]
It follows that
\[
\left\|\theta+m\alpha-\frac{a}{q}\right\|\ge \frac{1}{qq_m}-\frac{1}{q_m^2}\gg \frac{P}{qN}.
\]
Finally, using the standard bound for exponential sums,
\[
\left|\sum_{n\le N} e\Bigl(n\Bigl(\theta+m\alpha-\frac{a}{q}\Bigr)\Bigr)\right|\ll \min\left\{N,\frac{1}{\left\|\theta+m\alpha-\frac{a}{q}\right\|}\right\},
\]
the result follows.
\end{proof}

\section{Weight Function}\label{weight function}
To introduce a smooth cut‐off in the Diophantine condition \eqref{defining condn}, we insert a bump function.  This modification will simplify the application of Fourier analysis on the torus $\mbbT$.

Fix a parameter $\delta \in \pare{0, \tfrac14}$, and let $w_\delta \colon \mbbT \to [0,1]$ be a $C^\infty(\mbbT)$ bump function supported on those $x$ with $\norm{x} \in [-2\delta, 2\delta]$, and satisfying
\[
w_\delta(x) = 1 \quad \text{for } \norm{x} \le \delta.
\]
The following lemma provides a convenient truncated Fourier expansion of the function $w_\delta$.
\begin{lemma}\label{Fourier expansion of the bump function}
For $\delta \in \pare{0, \tfrac14}$, let $w_\delta$ be as defined above. For $L,\,A \geq 1$, the function $w_\delta$ admits the Fourier expansion
\[
w_\delta\pare{x}
= \sum_{\abs{\ell} \le L} \widehat{w_\delta}(\ell)\, e(\ell x)
+ O_A\pare{\frac{1}{\pare{1 + L/\delta}^A}}.
\]
Furthermore, the Fourier coefficients satisfy the bound
\begin{align}\label{bound on Fourier coefficients}
    \widehat{w_\delta}(\ell) \ll \delta.
\end{align}
\end{lemma}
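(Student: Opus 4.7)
The plan is to build $w_\delta$ as a rescaling of a fixed $C^\infty$ model bump, obtain the coefficient bound directly from its support, and derive the truncated expansion by repeated integration by parts on the Fourier side.

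First I would fix once and for all an even $\phi\in C_c^\infty(\R)$ with $0\le\phi\le 1$, $\phi\equiv 1$ on $[-1,1]$ and $\operatorname{supp}\phi\subset[-2,2]$, and set $w_\delta(x)=\phi(x/\delta)$ on the fundamental domain $[-\tfrac12,\tfrac12]$ of $\mbbT$, extended periodically. Since $\delta<\tfrac14$, the support lies in the interior of the fundamental domain, so $w_\delta\in C^\infty(\mbbT)$ has the prescribed plateau and support; the chain rule gives $\|w_\delta^{(k)}\|_\infty\ll_k\delta^{-k}$ and $\operatorname{supp} w_\delta^{(k)}\subset\{x\in\mbbT:\norm{x}\le 2\delta\}$. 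The coefficient bound \eqref{bound on Fourier coefficients} is then immediate from $|w_\delta|\le 1$ together with the support measure bound:
\[
\abs{\widehat{w_\delta}(\ell)}\le\int_{\mbbT}\abs{w_\delta(\theta)}\,d\theta\le 4\delta.
\]

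For the truncated expansion, set $R_L(x):=w_\delta(x)-\sum_{\abs{\ell}\le L}\widehat{w_\delta}(\ell)e(\ell x)$. Since $w_\delta\in C^\infty(\mbbT)$, its Fourier series converges absolutely, so $R_L(x)=\sum_{\abs{\ell}>L}\widehat{w_\delta}(\ell)e(\ell x)$ and $\sup_x\abs{R_L(x)}\le\sum_{\abs{\ell}>L}\abs{\widehat{w_\delta}(\ell)}$. Integration by parts $k$ times on $\mbbT$ gives $\widehat{w_\delta}(\ell)=(2\pi i\ell)^{-k}\widehat{w_\delta^{(k)}}(\ell)$ for $\ell\ne 0$; combining this with $\|w_\delta^{(k)}\|_1\le\|w_\delta^{(k)}\|_\infty\cdot\mu(\operatorname{supp} w_\delta^{(k)})\ll_k\delta^{1-k}$ yields
\[
\abs{\widehat{w_\delta}(\ell)}\ll_k\frac{\delta^{1-k}}{\abs{\ell}^k},\qquad\text{so that}\qquad\sum_{\abs{\ell}>L}\abs{\widehat{w_\delta}(\ell)}\ll_k\delta^{1-k}L^{-(k-1)}.
\]
Choosing $k$ sufficiently large in terms of $A$, and reverting to the trivial bound $\sup_x\abs{R_L(x)}=O(1)$ in the range where the claimed decay factor is already of order $1$, produces the stated error term.

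No step here is a real obstacle; this is the classical smooth-bump/integration-by-parts estimate. The only bookkeeping is to absorb the $k$-dependent constants into the implicit $O_A$ constant and to separate the small-$L$ and large-$L$ regimes when reading off the final form of the error.
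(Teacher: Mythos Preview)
Your proof is correct and follows essentially the same approach as the paper: the coefficient bound comes directly from the support of $w_\delta$, and the truncation error from repeated integration by parts giving $\widehat{w_\delta}(\ell)\ll_k(\delta/|\ell|)^k$ (the paper packages this as $\ll_A(1+|\ell|/\delta)^{-2A}$), then summing the tail. The explicit rescaling construction and the small-$L$ case distinction are harmless extras---in fact the latter is never needed here since $L\ge1$ and $\delta<\tfrac14$ force $L/\delta>4$.
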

\begin{proof}
    Repeated applications of integration by parts yield
    \begin{align}\label{bound for tail fourier coeffs}
        \widehat{w_\delta}(\ell) = \int_\mbbT w_\delta(x)e(-\ell x)\,dx \ll_A \frac{1}{(1+\abs{\ell}/\delta)^{2A}}.
    \end{align}
    Using the estimate
    \[\sum_{\abs{\ell}>L}\frac{1}{(1+\abs{\ell}/\delta)^{2A}}\ll_A \frac{1}{\pare{1 + L/\delta}^A}\]
    the first claim follows.
    
    For the second claim, we observe that
    \[\abs{\widehat{w_\delta}(\ell)}\leq \int_{\mbbT}w_\delta(x)\,dx \leq \int_{-2\delta}^{2\delta}1\,dx\ll \delta.\]
\end{proof}
We will also need the following convolution estimate for the Fourier coefficients of $w_\delta$, which will be used to obtain a lower bound on the main term when counting three-term arithmetic progressions.
\begin{lemma}\label{convolution lower bound}
For $\delta\in\pare{0,\tfrac{1}{4}}$, let $w_\delta$ be as defined above. Then for any $L,\,A\ge1$, one has the lower bound
    \[\sum_{\abs{\ell}\leq L}\widehat{w_\delta}(\ell)^2\widehat{w_\delta}(-2\ell)\gg\delta^2+O_A\pare{\frac{1}{\pare{1+L/\delta}^A}}.\]
\end{lemma}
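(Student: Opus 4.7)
The plan is to convert the sum into a single integral via Fourier inversion and then lower-bound that integral by exploiting the positivity of $w_\delta$.

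First, I would reduce the truncated sum to the full sum over $\ell\in\Z$. The bound \eqref{bound for tail fourier coeffs} from the proof of Lemma~\ref{Fourier expansion of the bump function} gives $\abs{\widehat{w_\delta}(\ell)}\ll_B(1+\abs{\ell}/\delta)^{-2B}$ for every $B\ge 1$, so the triple product $\widehat{w_\delta}(\ell)^2\widehat{w_\delta}(-2\ell)$ has super-polynomial decay in $\abs{\ell}/\delta$. A routine tail estimate (choosing $B$ large in terms of $A$) then yields
\[\sum_{\abs{\ell}>L}\widehat{w_\delta}(\ell)^2\widehat{w_\delta}(-2\ell)=O_A\pare{\frac{1}{(1+L/\delta)^A}}\]
for every $A\ge 1$, so it suffices to lower-bound the full sum over $\ell\in\Z$.

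Next, I would compute the full sum by Fourier inversion. Writing $\widehat{w_\delta}(-2\ell)=\int_\mbbT w_\delta(z)e(2\ell z)\,dz$, interchanging the (absolutely convergent) sum and integral, using \eqref{fourier coefficient of convolution} to identify $\widehat{w_\delta}(\ell)^2=\widehat{w_\delta*w_\delta}(\ell)$, and appealing to the pointwise Fourier expansion of the smooth function $w_\delta*w_\delta$ evaluated at $2z$, I obtain
\[\sum_{\ell\in\Z}\widehat{w_\delta}(\ell)^2\widehat{w_\delta}(-2\ell)=\int_\mbbT w_\delta(z)(w_\delta*w_\delta)(2z)\,dz.\]

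To finish, I would lower-bound this integral. Since $w_\delta\ge 0$, so is $w_\delta*w_\delta$. Restricting to $\norm{z}\le\delta/2$, we have $w_\delta(z)=1$ and $\norm{2z}\le\delta$; a direct calculation gives
\[(w_\delta*w_\delta)(2z)\ge\int_{\abs{y}\le\delta,\,\abs{2z-y}\le\delta}1\,dy\ge 2\delta-\abs{2z}\ge\delta,\]
since both convolution factors equal $1$ on the intersection. Consequently the integral is at least $\delta\cdot\delta=\delta^2$, and combining with the tail estimate from the first step gives the claim.

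The argument is essentially a Parseval/triple-convolution identity, so no step is really hard; the only mild technicalities are the tail bound and the pointwise Fourier convergence, both immediate from the Schwartz-type decay of $\widehat{w_\delta}$.
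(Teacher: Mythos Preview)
Your proposal is correct and follows essentially the same approach as the paper: both express the full sum $\sum_\ell \widehat{w_\delta}(\ell)^2\widehat{w_\delta}(-2\ell)$ as a physical-space convolution and lower-bound it via $w_\delta\ge \mathbf 1_{[-\delta,\delta]}$, with the tail handled by the decay~\eqref{bound for tail fourier coeffs}. The only cosmetic difference is that the paper packages the dilation into an auxiliary function $v_\delta$ with $\widehat{v_\delta}(m)=\widehat{w_\delta}(-2m)$ and evaluates the triple convolution $(w_\delta*w_\delta*v_\delta)(0)$, whereas you obtain the equivalent integral $\int_{\mathbb T} w_\delta(z)\,(w_\delta*w_\delta)(2z)\,dz$ directly; the lower bound $\gg\delta^2$ then follows in both cases from the same pointwise estimate on $[-\delta/2,\delta/2]$.
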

\begin{proof}
Set
\[v_\delta(\norm{x}) := \frac{w_\delta(-\norm{x}/2)}{2}.\]
Then,
\[\widehat{v_\delta}(m) = \frac{1}{2}\int_{-2\delta}^{2\delta} w_\delta\pare{\frac\theta2}e(-m\theta)\,d\theta = \int_{-\delta}^{\delta} w_\delta(\phi)e(-2m\phi)\,d\phi =  \widehat{w_\delta}(-2m).\]
Applying the convolution identity~\eqref{fourier coefficient of convolution} yields
\begin{align*}
    (w_\delta*w_\delta*v_\delta)(0) &= \sum_{\ell}\widehat{(w_\delta*w_\delta*v_\delta)}(\ell)e(0\cdot\ell)\\
    &=\sum_{\abs{\ell}\leq L}\widehat{w_\delta}(\ell)^2\widehat{w_\delta}(-2\ell)+\sum_{\abs{\ell}>L}\widehat{w_\delta}(\ell)^2\widehat{w_\delta}(-2\ell).
\end{align*}
The tail sum is controlled by the decay estimate~\eqref{bound for tail fourier coeffs},
\[\sum_{\abs{\ell}>L}\widehat{w_\delta}(\ell)^2\widehat{w_\delta}(-2\ell)\ll_A \frac{1}{\pare{1+L/\delta}^A}.\]

On the other hand, since $w_\delta\ge\mathbf{1}_{[-\delta,\delta]}$, using direct computation we have
\[(w_\delta*w_\delta*v)(0)\geq \mathbf{1}_{[-\delta, \delta]}*\mathbf{1}_{[-\delta, \delta]}*\frac{\mathbf{1}_{[-2\delta, 2\delta]}}{2}\geq \delta \mathbf{1}_{[-\delta, \delta]}*\frac{\mathbf{1}_{[-2\delta, 2\delta]}}{2}\gg \delta^2.\]
Combining these two estimates completes the proof.
\end{proof}
\section{Preliminaries for the Circle Method}\label{notation setup}
We set up the following notation to be used throughout the paper. Let $\alpha$ be irrational, and fix exponents
\[
0<\tau<\mu<\frac18.
\]
Let $J\ge1$ be the unique integer satisfying
\begin{align}\label{choice of J}
    4^J \;>\;\frac{1-5\mu}{1-8\mu}\;\ge\;4^{J-1},
\end{align}
and choose $\eta\in \pare{0, \tfrac1{20}}$ fixed such that
\begin{align}\label{choice of eta}
    \eta<\frac{4^J\,(1-8\mu)-(1-5\mu)}{16\,(4^J-1)}\leq\frac{1-8\mu}{4}.
\end{align}
Our choice of $J$ ensures that the expression in the middle above is positive. The reason for this peculiar choice of parameters $\eta$ and $J$ will be made clear below when we introduce a hierarchy of scales to aid our analysis.

Next, let $r$ and $s$ be coprime integers with $s$ large enough and such that~\eqref{rational appx to alpha} holds. Also let $N$ be such that  
\begin{align*}
    s = \floor{N^{1-4\mu-2\eta}}.
\end{align*}
It is useful to note that for $s$ large enough (or, interchangeably $N$ large enough) we have the bound $s\geq \sqrt{N}$ following from the second upper bound on $\eta$ in~\eqref{choice of eta}.
Set
\begin{align*}
    \delta := N^{-\tau},\quad  \text{ and }\quad M := N^{\mu}.
\end{align*}
For $\delta$ as defined above, set $w:=w_\delta$.

In our analysis we will frequently invoke Parseval’s identity to control second moments, and for that purpose we require an upper bound on the number of $n\le N$ satisfying $\norm{n\alpha}\leq \delta$. Using the exponential‐sum estimates established earlier, we now derive such a bound for the smoothed cutoff.
\begin{lemma}\label{bohr set is thin}
With notation as above, one has the bound
    \[\sum_{n\leq N}w(\norm{n\alpha}) \ll \delta N.\]
\end{lemma}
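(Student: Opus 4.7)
My plan is to Fourier-expand the smooth weight $w$ and bound the resulting exponential sums $\sum_{n\le N}e(\ell n\alpha)$ using the Diophantine approximation $|\alpha-r/s|\le 1/s^2$. Setting $L=\lfloor s/4\rfloor$ and applying Lemma~\ref{Fourier expansion of the bump function} with $A$ sufficiently large yields
\[
\sum_{n\le N}w(\|n\alpha\|)=\widehat w(0)\,N+\sum_{0<|\ell|\le L}\widehat w(\ell)\sum_{n\le N}e(\ell n\alpha)+O_A\!\left(\frac{N}{(1+L/\delta)^A}\right).
\]
Since $s\ge\sqrt N$ and $\delta=N^{-\tau}$, the ratio $L/\delta$ is a positive power of $N$, so the truncation error is $o(\delta N)$ for $A$ large. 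The $\ell=0$ term contributes $\widehat w(0)\,N\ll\delta N$ directly from~\eqref{bound on Fourier coefficients}.

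For $0<|\ell|\le L$, I need a lower bound on $\|\ell\alpha\|$. Writing $\alpha=r/s+\beta$ with $|\beta|\le 1/s^2$, coprimality $(r,s)=1$ gives $\|\ell r/s\|\ge 1/s$ for $0<|\ell|<s$, while $|\ell\beta|\le L/s^2\le 1/(4s)\le\tfrac12\|\ell r/s\|$. Hence $\|\ell\alpha\|\ge\tfrac12\|\ell r/s\|$, and the standard bound $\bigl|\sum_{n\le N}e(\ell n\alpha)\bigr|\ll 1/\|\ell\alpha\|$ produces a contribution of size $\ll|\widehat w(\ell)|/\|\ell r/s\|$ from each nonzero frequency.

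As $\ell$ ranges over $0<|\ell|\le L<s/2$, the residues $\ell r\pmod s$ are distinct and nonzero, so each value $k/s$ with $1\le k\le\lfloor s/2\rfloor$ is attained at most twice. Combined with $|\widehat w(\ell)|\ll\delta$, this yields
\[
\sum_{0<|\ell|\le L}\frac{|\widehat w(\ell)|}{\|\ell r/s\|}\ll\delta\sum_{k=1}^{\lfloor s/2\rfloor}\frac{s}{k}\ll\delta s\log s\ll\delta N^{1-4\mu-2\eta}\log N\ll\delta N,
\]
where the final step uses $s\le N^{1-4\mu-2\eta}$. There is no deep obstacle here; the only mild subtlety is to choose $L$ simultaneously small enough that $\|\ell\alpha\|$ is comparable to $\|\ell r/s\|$, and large enough that the Fourier truncation error is harmless---both constraints are comfortably met by $L=\lfloor s/4\rfloor$.
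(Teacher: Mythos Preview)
Your proof is correct but follows a different route from the paper's. The paper truncates the Fourier expansion at $L=M=N^{\mu}$ and then invokes its frequency-repulsion machinery (Lemmas~\ref{lemma for bound on exceptions that can be approximated using small denominator} and~\ref{bound for exponential sums with few exceptions}) with a suitably chosen $P$ to show that for every nonzero $|\ell|\le M$ one has $\sum_{n\le N}e(n\ell\alpha)\ll N/P$ uniformly, and then checks $P\gg M$ so that the total side contribution is $\ll\delta N$. You instead truncate at the larger level $L=\lfloor s/4\rfloor$ and argue directly: the approximation $|\alpha-r/s|\le 1/s^2$ gives $\|\ell\alpha\|\ge\tfrac12\|\ell r/s\|$ for $|\ell|\le L$, and summing $1/\|\ell r/s\|$ over distinct nonzero residues yields $O(s\log s)=o(N)$. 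Your argument is more elementary and self-contained---it is essentially the classical equidistribution estimate for a Bohr set and avoids the paper's auxiliary lemmas entirely---whereas the paper's proof has the virtue of exercising the same repulsion framework it needs later for the major/minor arc analysis.
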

\begin{proof}
    By Lemma~\ref{Fourier expansion of the bump function} with cutoff $L=M$, for any $A\geq 1$ we have
    \[\sum_{n\leq N}w(\norm{n\alpha}) = \sum_{\abs{\ell}\leq M}\widehat{w}(\ell)\sum_{n\leq N}e(n\ell\alpha)+O_A\pare{\frac{1}{N^{A(\mu-\tau)}}}.\]
    As $\mu>\tau$, choosing $A$ large enough, we have 
    \[\sum_{n\leq N}w(\norm{n\alpha}) = \sum_{\abs{\ell}\leq M}\widehat{w}(\ell)\sum_{n\leq N}e(n\ell\alpha)+O\pare{N^{-50}}.\]
    Set
    \[
P:=\min\set{\sqrt{\frac{s}{64M}}, \sqrt{\frac{N}{16s}}},
\]
With $\alpha, r, s, P, M$ as above, let $\mcE\pare{P; 0}$ be as defined in Lemma~\ref{lemma for bound on exceptions that can be approximated using small denominator}. Using Lemma~\ref{lemma for bound on exceptions that can be approximated using small denominator} observe that $\abs{\mcE\pare{P; 0}}\leq 1$. It is clear that $0 \in \mcE(P; 0)$, and therefore
\[\mcE\pare{P; 0} = \set{0}.\]
Therefore, for each $\ell$ satisfying $1\le\abs{\ell}\le M$, applying Lemma~\ref{bound for exponential sums with few exceptions} with $a=q=1$, we obtain
\[
\sum_{n\le N}e(n\ell\alpha)\ll\frac{N}{P}.
\]
Hence, using~\eqref{bound on Fourier coefficients},
    \[\sum_{n\leq N}w(\norm{n\alpha})\ll \delta N+O\pare{\delta N\frac{M}{P}}. \]
On the other hand, using $\eta\in\pare{0, \tfrac{1}{20}}$ and $\mu<\tfrac18$, we have
\[
P\gg\min\set{N^{\tfrac12-\tfrac{5\mu+2\eta}{2}}, N^{2\mu+\eta}}
\gg \min\set{N^{\frac{9}{20}-\frac{5\mu}{2}}, N^{2\mu}} \gg N^\mu
=M.\]
Thus the result follows.
\end{proof}
To organize our circle method analysis, we introduce a hierarchy of scales
$$P_1>P_2>\dots>P_J,$$
where for each $1\le j\le J$
\begin{align}\label{defn of Pj}
    P_j = \frac{N^{p_j}}{256},
\quad
p_j = \frac{1}{4}-\frac{(2\cdot 4^j-5)}{3}\pare{\frac{1-8\mu}{4}-\eta}.
\end{align}
Note that, by the second upper bound on $\eta$ in~\eqref{choice of eta}, the exponents $p_j$ are strictly decreasing:
\[p_{j}-p_{j+1} = 2\cdot 4^j\pare{\frac{1-8\mu}{4}-\eta}>0.\]
Moreover, using the upper bound on $4^{J-1}$ from~\eqref{choice of J}, we obtain
\begin{equation}\label{lower bound pJ}
  \begin{aligned}
    p_J-\eta &= \frac{1}{4}-\frac{2\cdot 4^J-5}{3}\pare{\frac{1-8\mu}{4}-\eta}-\eta\\
    &= \frac{1}{4}-\frac{2\cdot 4^J-5}{3}\cdot \frac{1-8\mu}{4}+\eta\pare{\frac{2\cdot 4^J-5}{3}-1}\\
    &\geq \frac{1}{4}-\frac{8\cdot 4^{J-1}-5}{3}\cdot \frac{1-8\mu}{4}\\
    &\geq \frac{1}{4}-\frac{8(1-5\mu)-5(1-8\mu)}{12} =  0
\end{aligned}
\end{equation}
Thus, the scales $P_j$ are indeed monotonically decreasing, and each $P_j > 1$ for sufficiently large $N$ (in fact, $P_j\gg N^\eta$).

Our strategy will be to partition the set of phases $\set{\theta + m\alpha}_{\abs{m} \leq M}$ according to whether each phase admits a good rational approximation with denominator at most $P_j$. For each such partition, we then apply the appropriate minor arc bound given by Lemma~\ref{minor arc estimate for Von Mangoldt} to estimate its contribution accordingly. 

To control the size of each partition, we apply Lemma~\ref{lemma for bound on exceptions that can be approximated using small denominator}, which requires that $N\geq 16sP_j^2$ for all $j\leq J$. A direct computation shows that $p_1 = 2\mu + \eta$, so for sufficiently large $N$, remembering that $p_J\geq \eta$ from~\eqref{lower bound pJ}, we have
\begin{align}\label{bounds on the scales}
    \frac{N^{\eta}}{256}\leq P_j\leq \frac{1}{16}\sqrt{\frac{N}{s}}\leq N^{\frac14} \quad\text{ for }\quad 1\leq j\leq J. 
\end{align}

To ensure the contribution from each partition is acceptable, we will require the recurrence relation
\[
\frac{MP_j^2}{s} = o\pare{\sqrt{P_{j+1}}}
\]
to hold for all $j < J$, along with the condition that $M = o\pare{\sqrt{P_1}}$. A direct check shows that
\[p_{j+1}-4p_j = 10\mu+5\eta-2,\quad p_1 = 2\mu+\eta\]
and hence, for large enough $N$, the relations
\begin{align}\label{recursion works for the choice of P_j}
N^{\eta}\frac{M^2P_j^4}{s^2}\le P_{j+1}\quad(1\le j<J),
\qquad
\frac{M^2N^{\eta}}{256}\le P_1
\end{align}
hold, implying the above mentioned recurrence relations.

To ensure, among other things, that the modified major arcs in our setting below are disjoint, we will require that at most one element of the set $\set{\theta + m\alpha}_{\abs{m} \leq M}$ is well approximated by a rational with denominator at most $P_J$. By Lemma~\ref{lemma for bound on exceptions that can be approximated using small denominator}, this is guaranteed provided that
\[
P_J \leq \frac{1}{8} \sqrt{\frac{s}{M}}.
\]
To verify this, observe that by the upper bound on $\eta$ in~\eqref{choice of eta}, we have
\begin{align*}
    \frac{1-4\mu-2\eta-\mu}{2}-p_J&= \frac{1-5\mu}{2}-\eta - \frac{1}{4}+\frac{2\cdot 4^J-5}{3}\pare{\frac{1-8\mu}{4}-\eta}\\
    &=\frac{4^J(1-8\mu)-(1-5\mu)}{6}-\frac{2(4^J-1)}{3}\eta> 0.
\end{align*}
Hence, for sufficiently large $N$,
\begin{align}\label{upper bound on PJ}
     P_J\leq \frac{1}{16}\sqrt{\frac{s}{2M}}.
\end{align}

We now split the unit torus $\mbbT$ into \emph{major arcs} and \emph{minor arcs} as follows. For $q< P_J$, and $a$ coprime to $q$, and $\abs{m}\leq M$, define the \emph{major arc}
\[\mcM_{a, q, m} := \set{\theta\in \mbbT: \norm{\theta+m\alpha-\frac{a}{q}}\leq \frac{P_J}{qN}},\]
and set the \emph{minor arcs} to be
\[\mathfrak{m} := \mbbT\setminus \bigcup_{q< P_J}\bigcup_{\substack{a = 1\\ (a, q) = 1}}^q\bigcup_{\abs{m}\leq M}\mcM_{a, q, m}.\]

We now show that the major arcs $\set{\mcM_{a, q, m}}_{a, q, m}$ are pairwise disjoint.
\begin{lemma}\label{major arcs are disjoint}
Let $q_1,q_2< P_J$, and let $1\le a_1\le q_1$, $1\le a_2\le q_2$ satisfy $(a_1,q_1)=(a_2,q_2)=1$. Moreover, let $m_1,m_2\in\Z$ satisfy $\abs{m_1},\abs{m_2}\le M$. Then
\[
\mathcal{M}_{a_1,q_1,m_1}\cap\mathcal{M}_{a_2,q_2,m_2}\neq\emptyset
\]
if and only if 
\begin{align}\label{non emptyness condn for major arcs}
    a_1 = a_2, \quad q_1 = q_2, \quad\text{ and }\quad m_1= m_2 .
\end{align}
\end{lemma}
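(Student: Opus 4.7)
The reverse implication (that \eqref{non emptyness condn for major arcs} makes the intersection nonempty) is immediate, so the plan is to prove the forward direction. Suppose $\theta\in\mcM_{a_1,q_1,m_1}\cap\mcM_{a_2,q_2,m_2}$; I will first force $m_1=m_2$ and then, with the $m$-variable eliminated, use a short rational-gap argument to force the triples to coincide.

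The key step is to observe that both $m_1$ and $m_2$ lie in the exceptional set $\mcE(P_J;\theta)$ from Lemma~\ref{lemma for bound on exceptions that can be approximated using small denominator}, by the very definition of the major arcs. The hypothesis $|m_j|\le M$ and $q_j<P_J$ places them in the right range, and the bound $\|\theta+m_j\alpha-a_j/q_j\|\le P_J/(q_jN)$ is exactly the condition entering Lemma~\ref{lemma for bound on exceptions that can be approximated using small denominator} (note that $N\ge 16sP_J^2$ follows from \eqref{bounds on the scales}). Applying that lemma, the size bound is $16MP_J^2/s+1$; but by \eqref{upper bound on PJ} we have $P_J^2\le s/(512M)$, so $16MP_J^2/s\le 1/32<1$, forcing $|\mcE(P_J;\theta)|\le 1$. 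Hence $m_1=m_2=:m$.

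With $m_1=m_2$, the two major arc conditions give, via the triangle inequality,
\[
\left\|\frac{a_1}{q_1}-\frac{a_2}{q_2}\right\|\le \frac{P_J}{N}\pare{\frac{1}{q_1}+\frac{1}{q_2}}\le \frac{2P_J}{N}.
\]
On the other hand, writing $a_1/q_1-a_2/q_2=(a_1q_2-a_2q_1)/(q_1q_2)$, the only way this reduced expression can have norm strictly less than $1/(q_1q_2)$ is for the numerator to be a multiple of $q_1q_2$, i.e.\ for $a_1/q_1\equiv a_2/q_2\pmod 1$. Since $P_J\le N^{1/4}$ by \eqref{bounds on the scales}, one has $2P_J/N<1/P_J^2\le 1/(q_1q_2)$ for $N$ large, ruling out the nonzero case.

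From $a_1/q_1\equiv a_2/q_2\pmod 1$ with both fractions in lowest terms and $a_j\in[1,q_j]$, a standard divisibility argument (clear the denominators to get $a_1q_2\equiv 0\pmod{q_1}$, use $(a_1,q_1)=1$ to get $q_1\mid q_2$, and symmetrically $q_2\mid q_1$) yields $q_1=q_2$ and then $a_1=a_2$. Combined with $m_1=m_2$, this gives \eqref{non emptyness condn for major arcs}. The only delicate point is the numeric step establishing $|\mcE(P_J;\theta)|\le 1$, but this has been engineered into the choice of $P_J$ via \eqref{upper bound on PJ}, so no real obstacle should arise.
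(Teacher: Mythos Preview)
Your proof is correct and follows essentially the same route as the paper's: you invoke Lemma~\ref{lemma for bound on exceptions that can be approximated using small denominator} together with the bounds \eqref{bounds on the scales} and \eqref{upper bound on PJ} to force $m_1=m_2$, and then use the standard rational-gap argument (comparing $\|a_1/q_1-a_2/q_2\|$ to $2P_J/N$ versus $1/(q_1q_2)$) to conclude $a_1/q_1=a_2/q_2$. Your write-up is simply more explicit about the numerics (e.g.\ the $16MP_J^2/s\le 1/32$ step and the divisibility argument at the end), but there is no substantive difference in method.
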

\begin{proof}
    Assume $\theta\in \mathcal{M}_{a_1,q_1,m_1}\cap\mathcal{M}_{a_2,q_2,m_2}$ and~\eqref{non emptyness condn for major arcs} does not hold. If $m_1 \neq m_2$, then the result immediately follows from the upper bounds on $P_J$ in~\eqref{bounds on the scales} and ~\eqref{upper bound on PJ}, and Lemma~\ref{lemma for bound on exceptions that can be approximated using small denominator}. Hence assume $m_1=m_2=m$ and that $a_1/q_1\neq a_2/q_2$ are distinct reduced fractions.
    By definition of the major arcs, we have
\[
\norm{\theta+m\alpha-\frac{a_1}{q_1}}\le \frac{P_J}{q_1N} \quad \text{and} \quad \norm{\theta+m\alpha-\frac{a_2}{q_2}}\le \frac{P_J}{q_2N}.
\]
By the triangle inequality,
\[
\frac{1}{q_1q_2}\leq \norm{\frac{a_1}{q_1}-\frac{a_2}{q_2}} \le \norm{\theta+m\alpha-\frac{a_1}{q_1}} + \norm{\theta+m\alpha-\frac{a_2}{q_2}} \le \frac{P_J}{q_1N} + \frac{P_J}{q_2N} \le \frac{2P_J}{N}.
\]
Since $q_1,q_2\le P_J$, this gives
\[
\frac{1}{P_J^2} \le \frac{2P_J}{N},
\]
contradicting~\eqref{bounds on the scales} and hence completing the proof.
\end{proof}
By Lemma~\ref{major arcs are disjoint}, the integral over $\mbbT$ decomposes as
\begin{align}\label{decomposition of the circle into major and minor arcs}
    \int_\mbbT = \sum_{\abs{m}\leq M}\sum_{q<Q_1}\sum_{\substack{a=1\\ (a, q)=1}}^q\int_{\mcM_{a, q, m}}+\int_{\mathfrak{m}}.
\end{align}
\section{Major and Minor Arc Estimates}\label{Major Arc and Minor Arc Estimates}
To implement the circle method in our setting, we require major and minor arc estimates for the following exponential sum
\begin{align}\label{exponential sum over bohr set primes}
    S_w(\theta) = S_{w}(\theta;N):= \sum_{n\le N} \Lambda(n)w\pare{\norm{\alpha n}}e(n\theta).
\end{align}
Recall the definition of $S(\theta)$ from~\eqref{exponential sum over primes defn}. The following lemma provides a major arc estimate for the exponential sum in~\eqref{exponential sum over bohr set primes}.
\begin{lemma}\label{major arc estimate} Let $q<P_J$ and let $1\leq a\leq q$ be such that $(a, q) = 1$. Suppose $m\in [-M, M]\cap\Z$, and let $\theta\in \mcM_{a, q, m}$. Then
    \[S_w(\theta) = \widehat{w}(m)S(\theta+m\alpha)+O\pare{\delta N^{1-\frac{\eta}{3}}}\]
and
\[S_w(-2\theta) = \widehat{w}(-2m)S(-2(\theta+m\alpha))+O\pare{\delta N^{1-\frac{\eta}{3}}}.
\]
\end{lemma}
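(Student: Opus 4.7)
My plan is to apply the truncated Fourier expansion of $w$ from Lemma~\ref{Fourier expansion of the bump function}, identify the frequency $\ell=m$ (resp.\ $\ell=-2m$) as the source of the main term, and bound all other frequencies by combining Vaughan's bound (Lemma~\ref{minor arc estimate for Von Mangoldt}) with the exception-set structure from Lemma~\ref{lemma for bound on exceptions that can be approximated using small denominator}. Taking a cutoff $L=2M$ and using that $w$ is symmetric so that $w(\norm{n\alpha})=w(n\alpha)$, swapping summations will give
\[
S_w(\theta) = \sum_{\abs{\ell}\le 2M}\widehat{w}(\ell)\,S(\theta+\ell\alpha) + O\pare{N^{-100}},
\]
with negligible Fourier tail error since $L/\delta = 2N^{\mu+\tau}$ and $A$ can be taken arbitrarily large. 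The $\ell=m$ term is already the asserted main term, so the problem reduces to bounding $\sum_{\ell\ne m,\,\abs{\ell}\le 2M}\widehat{w}(\ell)\,S(\theta+\ell\alpha)$ by $O(\delta N^{1-\eta/3})$.

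For this off-diagonal bound, I would stratify the range using the nested exception sets $\mcE_j := \mcE(P_j;\theta)\cap[-2M,2M]$, which by Lemma~\ref{lemma for bound on exceptions that can be approximated using small denominator} form a decreasing chain $\mcE_1\supseteq\cdots\supseteq\mcE_J$ with $\abs{\mcE_j}\le 32MP_j^2/s+1$. The hypothesis $\theta\in\mcM_{a,q,m}$ places $m\in\mcE_J$, and the sharpened bound~\eqref{upper bound on PJ} (applied with $2M$ in place of $M$) forces $\mcE_J=\set{m}$. Decomposing the remaining $\ell$ as $([-2M,2M]\setminus\mcE_1)\sqcup\bigsqcup_{j=1}^{J-1}(\mcE_j\setminus\mcE_{j+1})$, Dirichlet's theorem at scale $Q=N/P_{j+1}$ together with $\ell\notin\mcE_{j+1}$ produces a rational approximation of $\theta+\ell\alpha$ with denominator $q'\in[P_{j+1},N/P_{j+1}]$, and Vaughan's bound then gives $\abs{S(\theta+\ell\alpha)}\ll(\log N)^4\pare{N/\sqrt{P_{j+1}}+N^{4/5}}$.

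Summing level by level using $\abs{\widehat w(\ell)}\le\delta$ and the size bound for $\mcE_j$, the recurrence~\eqref{recursion works for the choice of P_j}, rewritten as $MP_j^2/s\le\sqrt{P_{j+1}}/N^{\eta/2}$, collapses each level's contribution to $O(\delta(\log N)^4 N^{1-\eta/2})$; the $\ell\notin\mcE_1$ level is handled analogously using $\sqrt{P_1}\ge MN^{\eta/2}/16$, and the $N^{4/5}$ terms are absorbed since $\tau,\mu<\tfrac{1}{8}$. Summing over the $J=O(1)$ levels yields the claimed $O(\delta N^{1-\eta/3})$. For the second identity I would run the same argument with $\theta$ replaced by $-2\theta$: the relation $-2\theta+(-2m)\alpha=-2(\theta+m\alpha)$ places the main term at $\ell=-2m$, and the major arc condition places $-2m$ in $\mcE(2P_J;-2\theta)$ (after reducing $-2a/q$ to lowest terms when $q$ is even), a set which the bound~\eqref{upper bound on PJ} again pins down to a singleton.

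The main obstacle will be the delicate balancing of scales: a naive application of Vaughan at the single scale $P_J$ yields only an $O(N^{1-\eta/2})$ error, which fails to beat $\delta N^{1-\eta/3}$ as soon as $\tau>\eta/6$. The telescoping inequality~\eqref{recursion works for the choice of P_j} is engineered precisely so that an exception set of size $\sim MP_j^2/s$ pairs with a Vaughan bound at scale $\sqrt{P_{j+1}}$ to yield a uniform $N^{-\eta/2}$ gain per level, and only this bookkeeping allows us to cover the full range $\tau\in\pare{0,\tfrac18}$.
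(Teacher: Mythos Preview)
Your proposal is correct and follows essentially the same route as the paper's proof: truncated Fourier expansion at cutoff $L=2M$, the nested exception-set stratification $\mcE_0\supseteq\mcE_1\supseteq\cdots\supseteq\mcE_J$ with Dirichlet plus Vaughan on each shell $\mcE_{j-1}\setminus\mcE_j$, the singleton conclusion $\mcE_J=\{m\}$ from~\eqref{upper bound on PJ}, and the level-by-level collapse via~\eqref{recursion works for the choice of P_j}. The treatment of $S_w(-2\theta)$ by passing to the doubled scale $2P_J$ (and implicitly $2P_j$ throughout) is likewise what the paper does, splitting into the cases $q$ even and $q$ odd; your parenthetical about reducing $-2a/q$ when $q$ is even is exactly the observation that then $-2\theta$ already lies in an undoubled major arc.
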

\begin{proof}
    Apply Lemma \ref{Fourier expansion of the bump function} with $L = 2M$ and $x = n\alpha$ to obtain
    \[S_w(\theta) = \sum_{\abs{\ell}\leq 2M}\widehat{w}(\ell)S(\theta+\ell\alpha)+O\pare{N^{-50}}.\]
    Let $\mcE_0:=[-2M, 2M]\cap \Z$. For $1\le j\le J$, define $\mcE_j\subseteq\mcE_{j-1}$ as the set of integers $\ell\in \mcE_{j-1}$ such that there exists coprime integers $a,\,q$ with $q< P_j$ satisfying
    \begin{align*}
        \abs{\theta+\ell\alpha-\frac{a}{q}}\leq \frac{P_j}{qN}.
    \end{align*}
By Dirichlet’s approximation theorem, for each $\ell \in \mathcal{E}_{j-1} \setminus \mathcal{E}_j$, there exist coprime integers $a$, $q$ with $q \in [P_j, N/P_j]$ such that
 \begin{align*}
     \abs{\theta+\ell\alpha-\frac{a}{q}}\leq \frac{P_j}{qN}\leq \frac{1}{q^2}.
 \end{align*}    
 Applying Lemma~\ref{minor arc estimate for Von Mangoldt}, for every $\ell\in \mcE_{j-1}\setminus \mcE_j$ we have
 \begin{align*}
     \abs{S(\theta+\ell\alpha)}\ll N(\log N)^{4}\pare{\frac{1}{\sqrt{P_j}}+\frac{1}{N^{1/5}}}\ll \frac{N(\log N)^{4}}{\sqrt{P_j}}
 \end{align*}
From Lemma~\ref{lemma for bound on exceptions that can be approximated using small denominator} and the upper bounds on $P_J$ in~\eqref{bounds on the scales} and~\eqref{upper bound on PJ}, we have $\abs{\mcE_J}\leq 1$. Since $\theta\in \mcM_{a, q, m}$, it follows that if $\mcE_J$ is non-empty, then $m\in \mcE_J$. 

Using the bound in~\eqref{bound on Fourier coefficients}, we conclude
    \begin{align*}
        S_w(\theta) = \widehat{w}(m)S(\theta+m\alpha)+O\pare{\delta N(\log N)^{4}\sum_{0\leq j< J}\frac{\abs{\mcE_j}}{\sqrt{P_{j+1}}}}+O\pare{N^{-50}}.
    \end{align*}
    Finally, invoking Lemma~\ref{lemma for bound on exceptions that can be approximated using small denominator} to control $\abs{\mcE_j}$, and using~\eqref{recursion works for the choice of P_j}, we obtain
    \begin{align*}
        S_w(\theta) &= \widehat{w}(m)S(\theta+m\alpha)+O\pare{\delta N(\log N)^4 JN^{-\frac{\eta}{2}}}\\
        &= \widehat{w}(m)S(\theta+m\alpha)+O\pare{\delta N^{1-\frac{\eta}{3}}}
    \end{align*}
    as required. 
    
    For $\theta\in \mcM_{a, q, m}$, observe that if $ q $ is even, the $-2\theta\in\mcM_{q-a, \frac{q}{2}, -2m}$. Hence, the estimate for $ S_w(-2\theta) $ follows from estimate for $S_w(\theta)$. 
    
    Now suppose $q$ is odd.  For this case, define, for any $ d<2P_J$, $1\leq c\leq  d$ with $(c,  d) = 1$, and $\abs{\ell}\leq M$,
    \begin{align*}
        \mcM^\prime_{c,  d, \ell}:=\set{\theta\in \mbbT\colon\norm{\theta+\ell\alpha-\frac{c}{ d}}\leq \frac{2P_J}{ d N}}.
    \end{align*}
    Then, for $\theta\in\mcM_{a, q, m}$, there exists an integer $1\leq b\leq q$ with $(b, q) = 1$, such that $-2\theta\in\mcM^\prime_{b, q, -2m}$. Now, by replacing the parameters $P_1,\, \dots,\, P_J $ with $2P_1,\,\dots,\, 2P_J$, the desired estimate for $ S_w(-2\theta) $ follows by analogous argument as that used for $S_w(\theta)$.
\end{proof}
The next lemma provides a point-wise bound for the exponential sum $S_w(\theta)$ on the minor arcs.
\begin{lemma}\label{minor arc estimate}
    We have the following uniform bound:
     \[\sup_{\theta\in \mathfrak{m}}\abs{S_w(\theta)}\ll \delta N^{1-\frac{\eta}{3}}.\]
\end{lemma}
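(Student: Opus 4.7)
The plan is to mirror the proof of Lemma~\ref{major arc estimate}, exploiting the fact that on the minor arcs the top level $\mcE_J$ of the hierarchy is (essentially) empty, so no main term survives. I would begin by applying Lemma~\ref{Fourier expansion of the bump function} with $L=2M$ and $A$ sufficiently large to write
\[
S_w(\theta)=\sum_{\abs{\ell}\le 2M}\widehat{w}(\ell)\,S(\theta+\ell\alpha)+O\pare{N^{-50}}.
\]

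Next I would build the same nested sequence $\mcE_0\supseteq \mcE_1\supseteq\cdots\supseteq \mcE_J$ with $\mcE_0=[-2M,2M]\cap\Z$ as in Lemma~\ref{major arc estimate}. Dirichlet's theorem together with Lemma~\ref{minor arc estimate for Von Mangoldt} then gives
\[
\abs{S(\theta+\ell\alpha)}\ll \frac{N(\log N)^4}{\sqrt{P_j}}\qquad\text{for every }\ell\in\mcE_{j-1}\setminus\mcE_j.
\]
The crucial new step — and the only place where the minor arc hypothesis is used — is to show that $\mcE_J$ contributes negligibly. By Lemma~\ref{lemma for bound on exceptions that can be approximated using small denominator} combined with~\eqref{upper bound on PJ}, we have $\abs{\mcE_J}\le 1$; moreover, if some $\ell^\ast\in \mcE_J$ satisfied $\abs{\ell^\ast}\le M$, then by construction $\theta$ would lie in the major arc $\mcM_{a,q,\ell^\ast}$, contradicting $\theta\in\mathfrak{m}$. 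Hence any surviving element of $\mcE_J$ must satisfy $M<\abs{\ell^\ast}\le 2M$, and the rapid decay~\eqref{bound for tail fourier coeffs} of $\widehat{w}$ (for $A$ chosen sufficiently large) then renders $\widehat{w}(\ell^\ast)S(\theta+\ell^\ast\alpha)=O\pare{N^{-50}}$.

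With $\mcE_J$ neutralised, the remaining work is identical bookkeeping to the major arc proof. Using $\abs{\widehat{w}(\ell)}\ll\delta$, the bound $\abs{\mcE_{j-1}}\ll MP_{j-1}^2/s+1$ from Lemma~\ref{lemma for bound on exceptions that can be approximated using small denominator} (with $\abs{\mcE_0}\ll M$ for the first step), the recursion~\eqref{recursion works for the choice of P_j}, and $P_j\gg N^\eta$, each quantity $\abs{\mcE_{j-1}}/\sqrt{P_j}$ is $\ll N^{-\eta/2}$. Summing over $1\le j\le J$ yields
\[
\abs{S_w(\theta)}\ll \delta N(\log N)^4\cdot J\cdot N^{-\eta/2}\ll \delta N^{1-\eta/3}
\]
for $N$ large enough, as required. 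The one conceptual subtlety, and the only part that is not a direct carry-over from Lemma~\ref{major arc estimate}, is the neutralisation of $\mcE_J$: in the major arc case that terminal element is exactly the one producing the main term, whereas here the minor arc condition forces it out of the range $\abs{\ell}\le M$ (or out of $\mcE_J$ altogether).
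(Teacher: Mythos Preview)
Your proposal is correct and follows essentially the same route as the paper's proof, which simply says the argument mirrors Lemma~\ref{major arc estimate} with $\mcE_J$ empty. In fact you are more careful than the paper on one point: since $\mcE_0=[-2M,2M]\cap\Z$ while major arcs are only defined for $\abs{m}\le M$, the minor arc hypothesis does not literally force $\mcE_J=\emptyset$, and your use of the decay~\eqref{bound for tail fourier coeffs} to dispose of a possible $\ell^\ast$ with $M<\abs{\ell^\ast}\le 2M$ closes that small gap.
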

\begin{proof}
The argument mirrors the proof of Lemma~\ref{major arc estimate}, except that for $\theta\in \mathfrak{m}$, $\mcE_J$ must be empty. Hence,
\begin{align*}
        S_w(\theta) = O\pare{\delta N(\log N)^{4}\sum_{0\leq j< J}\frac{\abs{\mcE_j}}{\sqrt{P_{j+1}}}}+O\pare{N^{-50}} = O\pare{\delta N^{1-\frac{\eta}{3}}}.
    \end{align*}
\end{proof}
\section{Proof of Theorem \ref{theorem 3APs}}\label{proof of theorem 3APs}
With the major and minor arc estimates established, we now proceed to complete the proof of Theorem~\ref{theorem 3APs}. By the orthogonality relation
\begin{align}\label{orthogonality relation}
    \int_{\mbbT}e((n-m)\theta)\,d\theta=\mathbf{1}_{n=m},
\end{align}
where $m,\, n\in \Z$, it suffices to establish the lower bound
\begin{align}\label{required lower bound for 3aps}
    \int_{\mbbT}S_w(\theta)^2\,S_w(-2\theta)\,d\theta
\gg \delta^2N^2.
\end{align}
Applying~\eqref{decomposition of the circle into major and minor arcs}, we have the decomposition
\begin{align}\label{circle method decomposition}
    \int_{\mbbT} S_w(\theta)^2\,S_w(-2\theta)\,d\theta = R_{\mcM} + R_{\mathfrak{m}},
\end{align}
where
\begin{align}\label{major arc contribution rep}
    R_{\mcM} := \sum_{q< Q_1} \sum_{\substack{a = 1\\(a,q)=1}}^q \sum_{|m|\le M} \int_{\mcM_{a,q,m}} S_w(\theta)^2\,S_w(-2\theta)\,d\theta,
\end{align}
and
\begin{align}\label{minor arc contribution rep}
    R_{\mathfrak{m}} := \int_{\mathfrak{m}} S_w(\theta)^2\,S_w(-2\theta)\,d\theta.
\end{align}
We will show that the major‐arc contribution $R_\mcM$ dominates and that the minor-arc contribution is negligible, thereby proving the desired bound.
\begin{lemma}\label{bound on minor arc}
We have
    \[R_\mathfrak{m}\ll \delta^2 N^{2-\frac\eta4}.\]
\end{lemma}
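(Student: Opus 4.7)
The plan is to peel off one factor of $S_w(\theta)$ on the minor arcs using the pointwise bound from Lemma~\ref{minor arc estimate}, and then dispose of the remaining two factors using a Cauchy--Schwarz/Parseval argument whose input is the thin-Bohr-set bound from Lemma~\ref{bohr set is thin}. This is the standard way to convert an $L^\infty$ minor arc estimate into a usable $L^3$-style bound when the density of the underlying set is known.

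Concretely, I would begin with
\[
|R_\mathfrak{m}|\le \Bigl(\sup_{\theta\in\mathfrak{m}}|S_w(\theta)|\Bigr)\int_{\mbbT}|S_w(\theta)|\,|S_w(-2\theta)|\,d\theta,
\]
so that Lemma~\ref{minor arc estimate} immediately contributes a factor of $O(\delta N^{1-\eta/3})$. Applying Cauchy--Schwarz to the remaining integral reduces matters to estimating $\|S_w\|_{L^2(\mbbT)}\cdot\|S_w(-2\,\cdot\,)\|_{L^2(\mbbT)}$. Both $L^2$-norms coincide by Parseval, since the map $n\mapsto -2n$ is injective on $\Z$, so it suffices to bound $\|S_w\|_{L^2(\mbbT)}^2=\sum_{n\le N}\Lambda(n)^2 w(\|n\alpha\|)^2$. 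Using the trivial bounds $\Lambda(n)\le\log N$ for $n\le N$ and $0\le w\le 1$, this is at most $(\log N)^2\sum_{n\le N}w(\|n\alpha\|)$, and Lemma~\ref{bohr set is thin} yields $\|S_w\|_{L^2(\mbbT)}^2\ll \delta N(\log N)^2$.

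Putting the pieces together,
\[
|R_\mathfrak{m}|\ll \delta N^{1-\eta/3}\cdot\delta N(\log N)^2=\delta^2 N^{2-\eta/3}(\log N)^2\ll \delta^2 N^{2-\eta/4}
\]
for $N$ sufficiently large, which is the claimed bound. The argument is essentially mechanical; there is no real obstacle, since all the nontrivial content has already been absorbed into Lemmas~\ref{minor arc estimate} and~\ref{bohr set is thin}. The only point that deserves a momentary pause is the Parseval identification of the $-2\theta$ integral with the $\theta$ integral, which follows at once from the change of variable on Fourier coefficients noted above; beyond that, the estimate is just bookkeeping.
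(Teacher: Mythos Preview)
Your proof is correct and follows essentially the same route as the paper: pull out $\sup_{\theta\in\mathfrak m}|S_w(\theta)|$, apply Cauchy--Schwarz to the remaining integral, and bound each $L^2$-norm via Parseval and Lemma~\ref{bohr set is thin}, then invoke Lemma~\ref{minor arc estimate}. The only cosmetic difference is that you apply the minor arc $L^\infty$ bound before the Cauchy--Schwarz step rather than after, and you make explicit why $\|S_w(-2\cdot)\|_{L^2}=\|S_w\|_{L^2}$ (injectivity of $n\mapsto -2n$), which the paper leaves implicit.
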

\begin{proof}
From~\eqref{minor arc contribution rep},
    \[R_\mathfrak{m}\ll \sup_{\theta\in\mathfrak{m}}\abs{S_w(\theta)}\int_{\mbbT}\abs{S_w(\theta)S_w(-2\theta)}\,d\theta.\]
An application of the Cauchy–Schwarz inequality gives
    \[R_\mathfrak{m}\ll \sup_{\theta\in\mathfrak{m}}\abs{S_w(\theta)}\pare{\int_{\mbbT}\abs{S_w(\theta)}^2\,d\theta}^{1/2}\pare{\int_{\mbbT}\abs{S_w(-2\theta)}^2\,d\theta}^{1/2}.\]
Invoking Parseval’s identity together with Lemma~\ref{bohr set is thin} shows that
\[\int_\mbbT \abs{S_w(\theta)}^2\,d\theta, \quad\int_\mbbT \abs{S_w(-2\theta)}^2\,d\theta\ll \delta N(\log N)^2.\]
Hence
    \[R_\mathfrak{m}\ll \sup_{\theta\in\mathfrak{m}}\abs{S_w(\theta)}\delta N(\log N)^2,\]
    and the claimed bound follows at once from Lemma~\ref{minor arc estimate}.
\end{proof}
\begin{lemma}\label{bound on major arc}
We have
    \[R_\mcM \gg \delta^2N^2.\]
\end{lemma}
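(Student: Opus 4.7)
My plan is to show that the main-term contribution to $R_\mcM$ factorises as a product of an $m$-sum and a classical three-term arithmetic progression major-arc integral, each admitting a clean lower bound, while the cross-term errors are negligible.

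On each $\mcM_{a,q,m}$, applying Lemma~\ref{major arc estimate} pointwise yields
\[
S_w(\theta)^2 S_w(-2\theta) = \widehat{w}(m)^2 \widehat{w}(-2m)\, S(\theta+m\alpha)^2\, S(-2(\theta+m\alpha)) + \mcE(\theta),
\]
where $\mcE(\theta)$ collects the cross terms, each a product of three factors with at least one of size $O(\delta N^{1-\eta/3})$. The measure-preserving translation $\phi = \theta + m\alpha$ sends $\mcM_{a,q,m}$ to the $m$-independent set $\mcM'_{a,q} := \{\phi : \|\phi - a/q\| \le P_J/(qN)\}$, so integrating the main term and summing over $(a,q,m)$ yields the factorisation
\[
\Bigl(\sum_{|m|\le M} \widehat{w}(m)^2 \widehat{w}(-2m)\Bigr) \cdot \Bigl(\sum_{q<P_J} \sum_{(a,q)=1} \int_{\mcM'_{a,q}} S(\phi)^2 S(-2\phi)\, d\phi\Bigr).
\]

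For the first factor, Lemma~\ref{convolution lower bound} applied with $L = M = N^\mu$ and $A$ large (so that $(1+M/\delta)^{-A} = N^{-A(\mu+\tau)}$ is negligible compared to $\delta^2$) gives $\gg \delta^2$. The second factor is the classical major-arc integral for three-term APs in the primes: since $P_J \gg N^\eta$ is a fixed positive power of $N$, the standard circle-method analysis (Vinogradov's asymptotic $S(a/q + \beta) \sim (\mu(q)/\phi(q)) \sum_{n\le N} e(n\beta)$ via Siegel-Walfisz, together with the positivity of the singular series for three-term APs in primes) produces a main term of order $N^2$. Hence the main-term contribution to $R_\mcM$ is $\gg \delta^2 N^2$.

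For the cross-term errors, each is bounded by $O(\delta N^{1-\eta/3})$ times an integral over the major arcs of a product like $|S_w|^2$, $|S|^2$, or $|S_w||S|$ (possibly weighted by $|\widehat w(m)|$ or $|\widehat w(-2m)|$), which we extend to all of $\mbbT$. Parseval together with Lemma~\ref{bohr set is thin} gives $\int_\mbbT |S_w|^2 \ll \delta N \log^2 N$, the classical Parseval bound gives $\int_\mbbT |S|^2 \ll N \log N$, and $\sum_m |\widehat w(m)|^2 \ll \delta$ by Parseval applied to $w$. Combining these via Cauchy-Schwarz, each error contribution is $\ll \delta^2 N^{2-\eta/3} \log^2 N = o(\delta^2 N^2)$. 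The main obstacle I expect is the classical lower bound for the second factor—verifying that the Vinogradov approximation gives negligible error after summing over $q < P_J$ and $|\beta| \le P_J/(qN)$—but this is standard in the circle-method literature for three-term APs in the primes.
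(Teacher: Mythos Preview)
Your approach is essentially the same as the paper's: both apply Lemma~\ref{major arc estimate} on each $\mcM_{a,q,m}$ to factor out the $m$-sum, control the cross terms via disjointness of the major arcs together with Cauchy--Schwarz, Parseval, Lemma~\ref{bohr set is thin}, and $\sum_m|\widehat w(m)|^2\ll\delta$, and then invoke Lemma~\ref{convolution lower bound} for the $m$-sum.  The one place the paper is slicker is in bounding the classical factor $\sum_{q<P_J}\sum_{(a,q)=1}\int_{\mcM'_{a,q}}S(\phi)^2S(-2\phi)\,d\phi$ from below: rather than appealing to a Vinogradov--Siegel--Walfisz asymptotic and singular-series positivity as you propose, the paper simply writes this as $\int_{\mbbT}S(\phi)^2S(-2\phi)\,d\phi$ minus the integral over the complement $\mbbT\setminus\bigcup_{a,q}\mcM'_{a,q}$; the full torus integral counts weighted three-term progressions in the primes and is classically $\gg N^2$, while on the complement one has $|S(\phi)|\ll N(\log N)^4/\sqrt{P_J}\ll N^{1-\eta/3}$ by Lemma~\ref{minor arc estimate for Von Mangoldt} and~\eqref{bounds on the scales}, so Cauchy--Schwarz and Parseval bound the complementary integral by $O(N^{2-\eta/4})$.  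This sidesteps the major-arc asymptotic altogether and directly addresses the obstacle you flagged.
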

\begin{proof}
    Invoking Lemma~\ref{major arc estimate}, we see that for every integer $q<P_J$, each $a$ with $1\le a\le q$ and $(a,q)=1$, and every $m\in[-M,M]\cap\mathbb{Z}$, 
    \begin{align}\label{one application of major arc estimate}
        \int_{\mcM_{a, q, m}}S_w(\theta)^2S_w(-2\theta)\,d\theta = \widehat{w}(m)&\int_{\mcM_{a, q, m}}S(\theta+m\alpha)S_w(\theta)S_w(-2\theta)\,d\theta\\\
        &+O\pare{ \delta N^{1-\frac{\eta}{3}}\int_{\mcM_{a, q, m}}\abs{S_w(\theta)S_w(-2\theta)}\,d\theta}\nonumber.
    \end{align}
    Since the arcs $\set{\mcM_{a,q,m}}$ are disjoint (Lemma~\ref{major arcs are disjoint}), summing over $m, \,q$ and $a$ yields
    \[\sum_{\abs{m}\leq M}\sum_{q< Q_1}\sum_{\substack{a=1\\ (a, q)=1}}^q\int_{\mcM_{a, q, m}}\abs{S_w(\theta)S_w(-2\theta)}\,d\theta\ll\int_{\mbbT}\abs{S_w(\theta)S_w(-2\theta)}\,d\theta.\]
    By the Cauchy-Schwarz inequality, Parseval’s identity and Lemma~\ref{bohr set is thin},
    \begin{align}\label{first application of parseval and cauchy}
        \sum_{\abs{m}\leq M}\sum_{q< Q_1}\sum_{\substack{a=1\\ (a, q)=1}}^q\int_{\mcM_{a, q, m}}\abs{S_w(\theta)S_w(-2\theta)}\,d\theta\ll \delta N(\log N)^2.
    \end{align}
    Next, a second application of Lemma~\ref{major arc estimate} gives
    \begin{align}\label{two applications of major arc estimate}
        \int_{\mcM_{a, q, m}}S(\theta+m\alpha)S_w(\theta)S_w(-2\theta)\,d\theta = \widehat{w}(m)&\int_{\mcM_{a, q, m}}S(\theta+m\alpha)^2S_w(-2\theta)\,d\theta\\
        +&O\pare{\delta N^{1-\frac{\eta}{3}}\int_{\mcM_{a, q, m}}\abs{S(\theta+m\alpha)S_w(-2\theta)}\,d\theta}\nonumber.
    \end{align}
Using the same strategy as before, one shows that
\begin{align}\label{second application of parseval and cauchy}
     \sum_{\abs{m}\leq M}\sum_{q< Q_1}\sum_{\substack{a=1\\ (a, q)=1}}^q\int_{\mcM_{a, q, m}}\abs{S(\theta+m\alpha)S_w(-2\theta)}\,d\theta\ll \delta^{1/2}N(\log N)^2.
\end{align}
A final application of Lemma~\ref{major arc estimate} then gives, for each major arc $\mcM_{a,q,m}$,
\begin{align}\label{three application of major arc estimate}
    \int_{\mcM_{a, q, m}}S(\theta+m\alpha)^2S_w(-2\theta)\,d\theta = \widehat{w}(-2m)&\int_{\mcM_{a, q, m}}S(\theta+m\alpha)^2S(-2(\theta+m\alpha))\,d\theta\\
    +&O\pare{\delta N^{1-\frac{\eta}{3}}\int_{\mcM_{a, q, m}}\abs{S(\theta+m\alpha)}^2\,d\theta}.\nonumber
\end{align}
By disjointness of the major arcs (Lemma~\ref{major arcs are disjoint}), together with Parseval’s identity, one obtains
\begin{align}\label{third application of parseval and cauchy}
     \sum_{\abs{m}\leq M}\sum_{q< Q_1}\sum_{\substack{a=1\\ (a, q)=1}}^q\int_{\mcM_{a, q, m}}\abs{S(\theta+m\alpha)}^2\,d\theta\ll N(\log N)^2.
\end{align}
Combining \eqref{major arc contribution rep}, \eqref{one application of major arc estimate},
\eqref{first application of parseval and cauchy}, \eqref{two applications of major arc estimate},
\eqref{second application of parseval and cauchy}, \eqref{three application of major arc estimate},
and \eqref{third application of parseval and cauchy}, we arrive at
    \begin{align}\label{final after using major arc estimates}
        R_\mcM = \sum_{\abs{m}\leq M}\widehat{w}(m)^2\widehat{w}(-2m)\sum_{q< Q_1}\sum_{\substack{a=1\\ (a, q)=1}}^q\int_{\mcM_{a, q, 0}}S(\theta)^2S(-2\theta)\,d\theta+O(\delta^2N^{2-\frac{\eta}{4}}).
    \end{align}
Set 
\begin{align*}
    \mcM^{(0)} := \bigcup_{q<Q_1}\bigcup_{\substack{a = 1\\ (a, q) = 1}}^q\mcM_{a, q, 0}.
\end{align*}
Then
    \[\sum_{q< Q_1}\sum_{\substack{a=1\\ (a, q)=1}}^q\int_{\mcM_{a, q, 0}}S(\theta)^2S(-2\theta)\,d\theta = \int_{\mbbT}S(\theta)^2S(-2\theta)\,d\theta-\int_{\mbbT\setminus \mcM^{(0)}}S(\theta)^2S(-2\theta)\,d\theta.\]
By Lemma~\ref{minor arc estimate for Von Mangoldt} and the lower bound on $P_J$ from \eqref{bounds on the scales},
    \begin{align}\label{upper bound for S(theta) in zeroth complement}
        \sup_{\theta\in \mbbT\setminus \mcM^{(0)}}\abs{S(\theta)}\ll \frac{N(\log N)^{4}}{\sqrt{P_J}}\ll N^{1-\frac{\eta}{3}}.
    \end{align}
Another application of the Cauchy-Schwarz inequality and Parseval's identity then gives
    \[\int_{\mbbT\setminus \mcM^{(0)}}\abs{S(\theta)^2S(-2\theta)\,d\theta}\ll \sup_{\theta\in \mbbT\setminus \mcM^{(0)}}\abs{S(\theta)}\int_\mbbT \abs{S(\theta)S(-2\theta)}\,d\theta\ll N^{2-\frac{\eta}{4}}.\]
It is well‐known (cf.\ \cite{fan}) that
    \[\int_{\mbbT}S(\theta)^2S(-2\theta)\,d\theta = \sum_{\substack{n_1, n_2, n_3\leq N\\n_1+n_3 = 2n_2}}\Lambda(n_1)\Lambda(n_2)\Lambda(n_3)\gg N^2.\]
Hence
    \begin{align*}
         \sum_{q< Q_1}\sum_{\substack{a=1\\ (a, q)=1}}^q\int_{\mcM_{a, q, 0}}S(\theta)^2S(-2\theta)\,d\theta \gg N^2.
    \end{align*}
 Substituting this into \eqref{final after using major arc estimates} and applying Lemma~\ref{convolution lower bound} completes the proof.
\end{proof}
Substituting the bounds from Lemma~\ref{bound on minor arc} and Lemma~\ref{bound on major arc} into \eqref{circle method decomposition}, we obtain the desired lower bound in \eqref{required lower bound for 3aps}, thereby completing the proof of Theorem~\ref{theorem 3APs}.
\section{Proof of Theorem \ref{theorem binary goldbach}}\label{proof of theorem binary goldbach}
Finally, we turn to the proof of Theorem~\ref{theorem binary goldbach}.  To obtain stronger cancellation in $S(\theta)$, we impose an additional constraint on $\mu$.  Define a sequence $\{\mu_t^*\}_{t\ge1}$ increasing to $\tfrac18$ by solving
$$4^{t} = \frac{1-5\mu^*_t}{1-8\mu^*_t}.$$
That is, 
$$\mu_t^* = \frac{4^t-1}{8\cdot 4^t-5}.$$
Recall the definition of $P_J$ from~\eqref{defn of Pj}. We would like to give a better lower bound on $P_J$ in order to facilitate cancellations in $S(\theta)$. Since $\mu_t^*\nearrow \tfrac18$ as $t\to\infty$, we may choose $m$ so that
$\tau<\mu_m^*<\tfrac18$.  Then pick $\mu\in(\tau,\mu_m^*)$ sufficiently close to $\mu_m^*$ so that $J=m$ and
\[
\frac{2(1-5\mu)}{3}-\frac{4^J(1-8\mu)}{6}
>
\frac{2(1-5\mu)}{3}
-\frac{1-8\mu}{1-8\mu_m^*}\,\frac{1-5\mu_m^*}{6}
>
\frac{1-5\mu}{3}
>
\mu.
\]
It follows that
\begin{equation}\label{lower bound on J in bin goldbach}
  P_J \gg N^{\mu+\eta}.
\end{equation}

Using the orthogonality relation \eqref{orthogonality relation}, it suffices to show
\[
\int_{\mathbb T} S(\theta)\,S_w(\theta)\,e(-n\theta)\,d\theta
\gg
\delta\,\mathfrak S(n)\,n
\]
for almost all even $n$, where
\[\mathfrak{S}(n):=2\prod_{\substack{p\mid n\\p>2}}\pare{\frac{p-1}{p-2}}\prod_{p>2}\pare{1-\frac{1}{(p-1)^2}}.\]
Since
\begin{align*}
    \int_{\mbbT}S(\theta)^2e(-n\theta)\,d\theta\gg \sum_{\substack{m_1, m_2\\m_1+m_2 = n}}\Lambda(m_1)\Lambda(m_2)\gg \mathfrak{S}(n)n
\end{align*}
for almost all even $n$ (cf.\ \cite[\S3.2]{vaughan}), Theorem \ref{theorem binary goldbach} will follow from the next lemma.

\begin{lemma}
We have
\begin{align*}
    \sum_{n\leq N}\abs{\int_\mbbT S(\theta)(S_w(\theta)-\widehat{w}(0)S(\theta) )e(-n\theta)\,d\theta}^2\ll \delta^2N^{3-\frac\eta4}.
\end{align*}
\end{lemma}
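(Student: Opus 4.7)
The plan is to apply Bessel's inequality to convert the mean-square sum over $n$ into a single $L^2$ integral on $\mbbT$, and then to estimate that integral via the major/minor arc decomposition already set up in Section~\ref{notation setup}. Setting $F(\theta):=S(\theta)\pare{S_w(\theta)-\widehat w(0)\,S(\theta)}$, Bessel will give
\[
  \sum_{n\le N}\abs{\widehat F(n)}^2 \;\le\; \int_\mbbT \abs{S(\theta)}^2\,\abs{S_w(\theta)-\widehat w(0)\,S(\theta)}^2\,d\theta,
\]
so it suffices to bound the right-hand side by $\delta^2 N^{3-\eta/4}$. I would partition $\mbbT=\mcM^{(0)}\cup\mcM^{(\neq 0)}\cup\mathfrak{m}$, where $\mcM^{(\neq 0)}:=\bigcup_{0<\abs{m}\le M}\bigcup_{q<P_J}\bigcup_{(a,q)=1}\mcM_{a,q,m}$, and estimate each piece separately.

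On $\mcM^{(0)}$, the $m=0$ case of Lemma~\ref{major arc estimate} gives $S_w-\widehat w(0)S\ll \delta N^{1-\eta/3}$, so together with $\int_\mbbT\abs{S}^2\ll N\log N$ this piece contributes $\ll \delta^2 N^{3-2\eta/3+o(1)}$. On each $\mcM_{a,q,m}\subset \mcM^{(\neq 0)}$, Lemma~\ref{major arc estimate} and $\abs{\widehat w(\cdot)}\ll\delta$ give
\[
  \abs{S_w-\widehat w(0)S}^2 \;\ll\; \delta^2\abs{S(\theta+m\alpha)}^2 \;+\; \delta^2\abs{S(\theta)}^2 \;+\; \delta^2 N^{2-2\eta/3}.
\]
After summing over the disjoint arcs (Lemma~\ref{major arcs are disjoint}), the $N^{2-2\eta/3}$ and $\abs{S(\theta)}^4$ pieces are handled by $\int_\mbbT\abs{S}^2\ll N\log N$ and the standard fourth-moment bound $\int_\mbbT\abs{S}^4\ll N^{2+o(1)}$. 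The delicate piece is $\abs{S(\theta+m\alpha)}^2\abs{S(\theta)}^2$: substituting $\phi=\theta+m\alpha$ translates $\mcM_{a,q,m}$ onto $\mcM_{a,q,0}$, and summing over $(a,q)$ and enlarging to $\mbbT$ reduces this contribution to
\[
  \delta^2\sum_{0<\abs{m}\le M}\int_\mbbT\abs{S(\phi)}^2\abs{S(\phi-m\alpha)}^2\,d\phi.
\]
Expanding $\abs{S(\phi)}^2=\sum_k c_k\,e(k\phi)$ with $c_k=\sum_{n_1-n_2=k,\,n_i\le N}\Lambda(n_1)\Lambda(n_2)$ (so $c_{-k}=c_k\ge 0$), Parseval evaluates the integral as $\sum_k c_k^2\,e(km\alpha)$, bounded in absolute value by $\sum_k c_k^2=\int_\mbbT\abs{S}^4\ll N^{2+o(1)}$ uniformly in $m$. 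Summing over $\abs{m}\le M$ yields $\ll \delta^2 MN^{2+o(1)}=\delta^2 N^{2+\mu+o(1)}$, comfortably within the target since $\mu<\tfrac18$.

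On $\mathfrak{m}$, Lemma~\ref{minor arc estimate} gives $\abs{S_w}\ll \delta N^{1-\eta/3}$, so $\abs{S_w-\widehat w(0)S}^2\ll \delta^2 N^{2-2\eta/3}+\delta^2\abs{S}^2$; the first term contributes as before. For the second, every $\theta\in\mathfrak{m}$ admits a Dirichlet approximation with denominator $q\in[P_J,N/P_J]$ (otherwise $\theta$ would lie in $\mcM^{(0)}$), so Lemma~\ref{minor arc estimate for Von Mangoldt} together with the \emph{refined} lower bound $P_J\gg N^{\mu+\eta}$ from~\eqref{lower bound on J in bin goldbach} yields $\abs{S(\theta)}\ll N^{1-(\mu+\eta)/2+o(1)}$, whence $\delta^2\int_\mathfrak{m}\abs{S}^4\ll \delta^2 N^{3-(\mu+\eta)+o(1)}\ll\delta^2 N^{3-\eta/4}$. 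Assembling the three pieces closes the argument. The main obstacles are twofold: the non-zero level term $\abs{S(\theta+m\alpha)}^2\abs{S(\theta)}^2$, which resists any sup-norm estimate and forces the translation/Parseval manoeuvre above; and the control of $\abs{S}$ on $\mathfrak{m}$, which is precisely why $\mu$ is refined at the start of this section so that $P_J\gg N^{\mu+\eta}$.
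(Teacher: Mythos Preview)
Your Bessel-inequality opening and the treatment of $\mcM^{(0)}$ match the paper. The genuine gap is your ``standard fourth-moment bound'' $\int_\mbbT |S|^4 \ll N^{2+o(1)}$: this is false. By Parseval, $\int_\mbbT |S|^4 = \sum_{|k|\le N} c_k^2$ with $c_k=\sum_{n-n'=k}\Lambda(n)\Lambda(n')$, and since $c_0\asymp N\log N$ and $c_k\ll N\log N$ for $\asymp N$ values of $k$, the fourth moment is $\asymp N^{3+o(1)}$ (equivalently, it counts solutions of $n_1+n_4=n_2+n_3$). With the correct exponent, your $\mcM^{(\neq 0)}$ estimate collapses in two places: the $\delta^2\int|S|^4$ term gives only $\delta^2 N^{3+o(1)}$, and the ``delicate piece'' gives
\[
\delta^2\sum_{0<|m|\le M}\Bigl|\sum_k c_k^2\,e(km\alpha)\Bigr|
\;\le\;\delta^2 M\sum_k c_k^2
\;\ll\;\delta^2 N^{3+\mu+o(1)},
\]
both strictly larger than the target $\delta^2 N^{3-\eta/4}$.

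The paper avoids this entirely by \emph{not} carving out $\mcM^{(\neq 0)}$. It uses the two-part split $\mbbT=\mcM^{(0)}\cup(\mbbT\setminus\mcM^{(0)})$ and on the complement bounds
\[
\int_{\mbbT\setminus\mcM^{(0)}}|S|^2\,|S_w-\widehat w(0)S|^2
\;\le\;\sup_{\mbbT\setminus\mcM^{(0)}}|S|^2
\int_\mbbT |S_w-\widehat w(0)S|^2
\;\ll\; N^{2-(\mu+\eta)+o(1)}\cdot \delta N,
\]
where the $L^2$ factor comes from Parseval and Lemma~\ref{bohr set is thin} (so $\int|S_w|^2\ll\delta N(\log N)^2$) together with $|\widehat w(0)|\ll\delta$. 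Since $\mu>\tau$ one has $\delta N^{-\mu}\le N^{-2\tau}=\delta^2$, hence $\delta N^{3-\mu-\eta+o(1)}\ll\delta^2 N^{3-\eta/4}$. Your three-way split can be repaired along the same lines---$\mcM^{(\neq 0)}\subset\mbbT\setminus\mcM^{(0)}$ by Lemma~\ref{major arcs are disjoint}, so the sup bound on $S(\theta)$ is available there too---but the translation/Parseval manoeuvre and fourth moment are neither needed nor sufficient.
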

\begin{proof}
    By Bessel's inequality,
    \begin{align*}
    \sum_{n\leq N}\abs{\int_\mbbT S(\theta)(S_w(\theta)-\widehat{w}(0)S(\theta) )e(-n\theta)\,d\theta}^2\leq \int_\mbbT \abs{S(\theta)(S_w(\theta)-\widehat{w}(0)S(\theta) )}^2\,d\theta.
\end{align*}
Let 
\begin{align*}
    \mcM^{(0)} := \bigcup_{q<Q_1}\bigcup_{\substack{a = 1\\ (a, q) = 1}}^q\mcM_{a, q, 0},
\end{align*}
and split the integral as
\begin{align*}
    \int_\mbbT = \int_{\mcM^{(0)}}+\int_{\mbbT\setminus \mcM^{(0)}}.
\end{align*}
On $\mathcal M^{(0)}$, Lemma~\ref{major arc estimate} and Parseval's identity gives
\begin{align*}
    \int_{\mcM^{(0)}} \abs{S(\theta)(S_w(\theta)-\widehat{w}(0)S(\theta))}^2\,d\theta &\leq \sup_{\theta\in \mcM^{(0)}}\abs{S_w(\theta)-\widehat{w}(0)S(\theta)}^2\int_{\mbbT}\abs{S(\theta)}^2\,d\theta\\
    &\ll \delta^2 N^{3-\frac\eta4}.
\end{align*}
Moreover, on $\mathbb T\setminus\mathcal M^{(0)}$, Parseval's identity, Lemma~\ref{bohr set is thin} and~\eqref{bound on Fourier coefficients} gives
\begin{align*}
    \int_{\mbbT\setminus\mcM^{(0)}} \abs{S(\theta)(S_w(\theta)-\widehat{w}(0)S(\theta))}^2\,d\theta &\leq \sup_{\theta\in \mbbT\setminus\mcM^{(0)}}\abs{S(\theta)}^2\int_{\mbbT}\abs{S_w(\theta)-\widehat{w}(0)S(\theta)}^2\,d\theta\\
    &\ll \sup_{\theta\in \mbbT\setminus\mcM^{(0)}}\abs{S(\theta)}^2\delta N.
\end{align*}
Using~\eqref{lower bound on J in bin goldbach} and Lemma~\ref{minor arc estimate for Von Mangoldt}, we see that
\[\sup_{\theta\in \mbbT\setminus\mcM^{(0)}}\abs{S(\theta)}\ll N^{1-\frac{(\mu+\eta)}{2}}.\]
Therefore
\begin{align*}
    \int_{\mbbT\setminus\mcM^{(0)}} \abs{S(\theta)(S_w(\theta)-\widehat{w}(0)S(\theta))}^2\,d\theta &\ll \delta N^{3-\mu-\frac{\eta}{3}}\\
    &\ll \delta^2 N^{3-\frac\eta4}.
\end{align*}
This completes the proof.
\end{proof}


\end{document}